\documentclass[11pt]{article}

\usepackage{amsfonts,amsmath, amssymb, amsthm,latexsym,color,epsfig,mathrsfs,enumitem}
\setlength{\textheight}{22.5cm} \setlength{\textwidth}{6.7in}
\setlength{\topmargin}{0pt} \setlength{\evensidemargin}{1pt}
\setlength{\oddsidemargin}{1pt} \setlength{\headsep}{10pt}
\setlength{\parskip}{1mm} \setlength{\parindent}{0mm}

\newtheorem{thm}{Theorem}

\newtheorem{lem}[thm]{Lemma}


\newcommand{\A}{\mathcal{A}}

\newcommand{\pe}{\partial _e}
\DeclareMathOperator{\dir}{Dir}

\newcommand{\eps}{\varepsilon}
\newcommand{\es}{\emptyset}
\newcommand{\aA}{\alpha}

\title{A stability result for the cube edge isoperimetric inequality}
\author{Peter Keevash\thanks{Mathematical Institute, University of Oxford, Oxford, UK. 
E-mail: keevash@maths.ox.ac.uk.\newline \hspace*{1.5em}
Research supported in part by ERC Consolidator Grant 647678.}
\and Eoin Long\thanks{Mathematical Institute, University of Oxford, Oxford, UK. 
E-mail: long@maths.ox.ac.uk.}}
\date{\today}

\begin{document}

\maketitle
\abstract{
We prove the following stability version 
of the edge isoperimetric inequality for the cube:
any subset of the cube with average boundary degree
within $K$ of the minimum possible
is $\eps$-close to a union of $L$ disjoint cubes,
where $L \le L(K,\eps)$ is independent of the dimension.
This extends a stability result of Ellis,
and can viewed as a dimension-free version 
of Friedgut's junta theorem.}

\section{Introduction}

The edge isoperimetric inequality 
is a fundamental result in Extremal Combinatorics 
concerning the distribution of edges in the cube. 
The $n$-\emph{cube} $Q_n$ is the graph on vertex set $\{0,1\}^n$ 
in which vertices are adjacent if they differ in a single coordinate. 
The \emph{edge boundary} of a set $\A \subset V(Q_n)$ 
is the set of edges $\pe (\A ) \subset E(Q_n)$ that leave $\A $, i.e.\ $\pe (\A ) = 
\{xy \in E(Q_n): x \in \A , y \notin \A \}$.
A tight lower bound on $|\pe (\A )|$ was given 
by Bernstein \cite{bernstein}, Harper \cite{harper},
Hart \cite{hart} and Lindsey \cite{lindsey},
who proved that the extremal sets are initial segments 
of the `binary ordering' on $Q_n$ (see also Chapter 16 of \cite{bol}).
In particular, the following bound is tight
when $|\A| = 2^d$ for some $d \in {\mathbb N}$
(take $\A$ to be the vertices of a $d$-dimensional subcube).
\begin{thm}
	\label{thm: edge iso}
	Every $\A \subset V(Q_n)$ satisfies $|\pe (\A)| \geq 
	|\A | \cdot \log _2\big ( 2^n / |\A|\big )$.
\end{thm}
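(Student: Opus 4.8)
The plan is to prove Theorem~\ref{thm: edge iso} by induction on $n$, slicing the cube along its last coordinate; this keeps the argument self-contained (the alternative route, via compressions reducing to initial segments of the binary order, is correct but lengthier to set up). It is convenient to rephrase the bound through the concave function $g\colon[0,1]\to\mathbb R$, $g(x)=-x\log_2 x$ (with $g(0)=0$): writing $\mu(\A)=|\A|/2^n$, the claimed inequality $|\pe(\A)|\ge|\A|\log_2(2^n/|\A|)$ is exactly $|\pe(\A)|\ge 2^n g(\mu(\A))$. The base case $n=0$ is trivial, since then $Q_0$ is a point, $\pe(\A)=\es$, and $\mu(\A)\in\{0,1\}$, where $g$ vanishes.

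For the inductive step, split $V(Q_n)$ into the two sub-cubes $Q_{n-1}\times\{0\}$ and $Q_{n-1}\times\{1\}$, and set $\A_i=\{x\in\{0,1\}^{n-1}:(x,i)\in\A\}$ for $i=0,1$. A routine case check — edges inside each slice versus the $2^{n-1}$ edges joining $(x,0)$ to $(x,1)$ — gives the identity
\[
|\pe(\A)| = |\pe(\A_0)| + |\pe(\A_1)| + |\A_0\triangle\A_1|.
\]
Write $\mu_i=|\A_i|/2^{n-1}$, so $\mu(\A)=(\mu_0+\mu_1)/2$. Applying the induction hypothesis to $\A_0,\A_1\subset V(Q_{n-1})$ bounds the first two terms below by $2^{n-1}g(\mu_0)$ and $2^{n-1}g(\mu_1)$, so it suffices to show
\[
|\A_0\triangle\A_1| \ \ge\ 2^{n-1}\Bigl(2g\bigl(\tfrac{\mu_0+\mu_1}{2}\bigr)-g(\mu_0)-g(\mu_1)\Bigr).
\]
The right-hand side is nonnegative precisely because $g$ is concave; this is the heart of the matter — the ``cross-edge'' term $|\A_0\triangle\A_1|$ has to pay for the concavity deficit — and I expect verifying it to be the one genuine step.

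To do so, assume without loss of generality $|\A_0|\le|\A_1|$ and use the trivial bound $|\A_0\triangle\A_1|\ge|\A_1|-|\A_0|=2^{n-1}(\mu_1-\mu_0)$. Dividing by $2^{n-1}$ and substituting $\mu_0=st$, $\mu_1=s(1-t)$ with $s=\mu_0+\mu_1$ and $t\in[0,\tfrac12]$, a short expansion of the logarithms rewrites the target inequality as $s(1-2t)\ge s\bigl(1-H(t)\bigr)$, where $H(t)=-t\log_2 t-(1-t)\log_2(1-t)$ is the binary entropy. Since $s\ge 0$ this reduces to $H(t)\ge 2t$ on $[0,\tfrac12]$, which holds because $y=2t$ is exactly the chord of the concave function $H$ through $(0,0)$ and $(\tfrac12,1)$, and a concave function lies above its chords. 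This closes the induction. Chasing equality back up: at each step one needs $t\in\{0,\tfrac12\}$, which is consistent with $\A$ being a subcube — $t=\tfrac12$ when the subcube is free in the sliced coordinate and $t=0$ when it is fixed — confirming the tightness noted before the theorem.
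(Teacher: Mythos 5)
Your proof is correct. A small clarification first: the paper does not actually supply a proof of Theorem~\ref{thm: edge iso} --- it is quoted as a known result of Bernstein, Harper, Hart and Lindsey, with a pointer to Bollob\'as's book --- so there is no ``paper proof'' to compare against; your argument is an independent, self-contained derivation. That said, it is very much in the spirit of the paper's methods. Your inductive step is the coordinate-slicing decomposition $|\pe(\A)| = |\pe(\A_0)| + |\pe(\A_1)| + |\A_0\triangle\A_1|$, and after the substitution $\mu_0 = st$, $\mu_1 = s(1-t)$ the whole thing boils down to $H(t)\ge 2t$ on $[0,\tfrac12]$. The paper uses precisely this slicing identity and precisely this chord inequality $H(\gamma)\ge 2\gamma$ in the proof of Theorem~\ref{dimension free stability} (see the display around \eqref{improved isoperimetric error} and the remark immediately following it), and Lemma~\ref{lemma: sectional control} is essentially a multi-block refinement of the same bookkeeping. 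The classical route via Harper-type compressions that you mention would instead reduce to initial segments of the binary order and is indeed more machinery than needed for the plain inequality; your elementary induction is a cleaner fit for a paper that is going to keep reusing the $H(\gamma)\ge 2\gamma$ device. One tiny stylistic note: you invoke concavity of $g$ to observe that the right-hand side of the key inequality is nonnegative, but your actual argument bypasses that and reduces directly to concavity of $H$; the remark is harmless but not load-bearing, and you could drop it without loss.
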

 
The next natural question is to understand the structure 
of subsets in the cube for which the inequality 
in Theorem \ref{thm: edge iso} is close to an equality:
must they be close to an extremal example?
Indeed, for any problem in Extremal Combinatorics,
the study of this `stability' question often leads
to a deeper understanding of the original question.
The following stability version of Theorem \ref{thm: edge iso}
was obtained by Ellis \cite{Ellis}, 
solving a conjecture of Bollob\'as, Leader and Riordan. 

\begin{thm}
	\label{thm: Ellis}
	There is $\varepsilon _0>0$ so that for $0 \leq \varepsilon \leq \varepsilon _0$, the following holds. 
	Suppose $\A \subset V(Q_n)$ with $|\pe (\A )| \leq 
	|\A | \big ( \log _2(2^n / |\A |) + \varepsilon )$. 
	Then there is a subcube $C$ of $Q_n$ with $|\A \triangle C| \leq 
	\frac {3 \varepsilon }{\log _2 (\varepsilon ^{-1})}|\A |$.
\end{thm}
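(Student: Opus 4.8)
\section*{Proof proposal}

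The plan is to induct on $n$, slicing the cube along a single coordinate and tracking the \emph{defect} $\Delta(\A):=|\pe(\A)|-|\A|\log_2(2^n/|\A|)$, which is $\ge 0$ by Theorem~\ref{thm: edge iso}. The hypothesis reads $\Delta(\A)\le\eps|\A|$, and the conclusion is equivalent to $|\A\triangle C|\le \tfrac{3\Delta(\A)}{\log_2(|\A|/\Delta(\A))}$. One may assume $|\A|\le 2^{n-1}$: if $|\A|>2^{n-1}$ the only nearby subcube is $Q_n$ (the subcube densities $\ge\tfrac12$ are only $1$ and $\tfrac12$), and one shows directly, by applying Theorem~\ref{thm: edge iso} to $\A^c$, that an almost-extremal set of density $>\tfrac12$ must be close to $Q_n$.

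Slice along a coordinate $i$, writing $\A=(\A_0\times\{0\})\cup(\A_1\times\{1\})$ with $|\A_0|\ge|\A_1|$, and set $\alpha_j=2^{-(n-1)}|\A_j|$ and $g(x)=x\log_2(1/x)$. Combining $|\pe(\A)|=|\pe(\A_0)|+|\pe(\A_1)|+|\A_0\triangle\A_1|$ with the concavity of $g$ yields
\[
\Delta(\A)=\Delta(\A_0)+\Delta(\A_1)+\delta_\triangle+2^{n-1}\psi(\alpha_0,\alpha_1),
\]
where $\delta_\triangle:=|\A_0\triangle\A_1|-\big|\,|\A_0|-|\A_1|\,\big|\ge0$ and $\psi(\alpha_0,\alpha_1):=g(\alpha_0)+g(\alpha_1)+(\alpha_0-\alpha_1)-2g\big(\tfrac{\alpha_0+\alpha_1}{2}\big)\ge0$; summed over $i$ this reproves Theorem~\ref{thm: edge iso}, and here it bounds each of the four non-negative summands by $\eps|\A|$. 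A direct computation gives $\psi(\alpha_0,\alpha_1)=\tfrac{\alpha_1}{\ln 2}\,h(\alpha_0/\alpha_1)$ with $h(r)=r\ln(1+\tfrac1r)+\ln(1+r)-\ln 4$, which vanishes only at $r=1$, satisfies $h(r)\asymp r-1$ near $1$ and $h(r)\asymp\ln r$ for $r$ large; since $2^{n-1}\psi=\tfrac{|\A|\,h(r)}{(r+1)\ln 2}$, the bound $2^{n-1}\psi\le\eps|\A|$ forces a dichotomy: either the slices are \emph{balanced}, $\alpha_0/\alpha_1=1+O(\eps)$, or \emph{lopsided}, $\alpha_0/\alpha_1\gtrsim\eps^{-1}\log_2(\eps^{-1})$, in which case $|\A_1|=O\!\big(\tfrac{\eps}{\log_2(\eps^{-1})}|\A|\big)$.

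If some coordinate yields a lopsided slicing, use it and recurse on $\A_0$ (whose relative defect is still $(1+o(1))\eps$, as $|\A_0|\ge|\A|-|\A_1|=(1-o(1))|\A|$), obtaining a subcube $C_0$; set $C=C_0\times\{0\}$, paying an extra $|\A_1|$. (When $\A_1=\es$, i.e.\ $i$ is constant on $\A$, this costs nothing and preserves $\eps$ exactly.) Otherwise every slicing is balanced; recurse on \emph{both} slices to get subcubes $C_0,C_1$. Here $|\A_0\triangle\A_1|=\delta_\triangle+\big|\,|\A_0|-|\A_1|\,\big|=O(\eps|\A|)$, while any two distinct subcubes of the same dimension have symmetric difference at least their common size and $|C_j|=(1-o(1))|\A_j|$ with $|\A_0|\approx|\A_1|\approx|\A|/2$; hence $C_0=C_1=:C'$, and we set $C=C'\times\{0,1\}$, so $|\A\triangle C|=|\A_0\triangle C_0|+|\A_1\triangle C_1|$. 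The denominator $\log_2(\eps^{-1})$ enters because a lopsided slice, being lopsided, consumes $2^{n-1}\psi\gtrsim|\A_1|\log_2(\eps^{-1})$ of the defect, and the analytic defects at successive steps telescope into $\Delta(\A)\le\eps|\A|$, so the discarded slices sum to $O\!\big(\tfrac{\eps}{\log_2(\eps^{-1})}|\A|\big)$.

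The crux — where the real work lies — is the quantitative accounting needed to reach the \emph{sharp} constant $3$. At a balanced step the relative defect of a slice can double (when almost all of $\Delta$ sits on one side) and errors at balanced steps add, so naively these effects compound over the up to $\Theta(n)$ balanced steps. One controls this using the fine structure of the decomposition: a balanced step with $\Delta(\A_0)=\Delta(\A)$ forces $\A_0=\A_1$ and $\A$ exactly extremal, so the defect is always strictly redistributed, and whenever the error roughly doubles the defect is roughly halved; the products of these competing factors — together with the matching telescoping of the $\log_2(\eps^{-1})^{-1}$ coefficients down the recursion — stay bounded by $3$. One must also establish the subcube-rigidity estimate used above, fix $\eps_0$ small enough that the recursive parameter never leaves the valid range, and dispose (elementarily) of the base cases: $n$ or $|\A|$ bounded, $\Delta=0$, and density $>\tfrac12$. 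None of these is deep, but all the constants must be tuned so that they close.
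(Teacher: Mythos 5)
Theorem \ref{thm: Ellis} is cited from Ellis \cite{Ellis}; the present paper contains no proof of it, so there is no in-paper argument with which to compare your attempt, and Ellis's own proof proceeds by quite different means than your slicing induction. Evaluated on its own terms, your decomposition $\Delta(\A)=\Delta(\A_0)+\Delta(\A_1)+\delta_{\triangle}+2^{n-1}\psi(\alpha_0,\alpha_1)$ is correct, as is the observation that $\psi$ is proportional to $-(1+s)\log_2(1+s)-(1-s)\log_2(1-s)+2s$ where $s=(\alpha_0-\alpha_1)/(\alpha_0+\alpha_1)$, which vanishes only at $s\in\{0,1\}$; this does yield your balanced/lopsided dichotomy, and the lopsided branch is unproblematic.

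The balanced branch, which you flag as the crux, has a real gap that the sketch does not close, and I do not think it closes as described. Write $m=|\A_0|=|\A_1|=|\A|/2$ and $L=\log_2\bigl(m/\Delta(\A)\bigr)$. Feeding the inductive hypothesis into the two halves requires
\[
\frac{\Delta(\A_0)}{\log_2\bigl(m/\Delta(\A_0)\bigr)}+\frac{\Delta(\A_1)}{\log_2\bigl(m/\Delta(\A_1)\bigr)}\le\frac{\Delta(\A)}{L+1},
\]
and this fails whenever $\Delta(\A_0)$ is close to $\Delta(\A)$ and $\Delta(\A_1)$ close to $0$: the left side then tends to $\Delta(\A)/L>\Delta(\A)/(L+1)$. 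Your observation that the exactly degenerate case ($\Delta(\A_0)=\Delta(\A)$ at a perfectly balanced step) forces $\A_0=\A_1$ and $\A$ extremal is correct, but it does not dispose of nearly degenerate cases $\Delta(\A_0)=(1-\eta)\Delta(\A)$ with tiny $\eta>0$, for which the displayed inequality holds only when $L\gtrsim1/\eta$; these are exactly the cases where the quantitative accounting must be done, and the ``competing factors telescope to $3$'' claim is precisely the missing analysis rather than a justification of it. Two further consequences compound the problem: a balanced step can produce a child with relative defect approaching $2\eps$, so after several balanced steps the recursive parameter can exceed $\eps_0$ and leave the regime where the inductive statement (or Theorem \ref{thm: Ellis} itself) is available; and the subcube-rigidity step you use to conclude $C_0=C_1$ presupposes both recursive errors are well below $|\A_j|/2$, which is again threatened once the relative defect grows. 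To repair the balanced branch one would need a strictly stronger inductive hypothesis that carries more information than the single ratio $\eps$, and it is far from clear that any such strengthening can be propagated while keeping the constant $3$.
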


\noindent This result has recently been refined 
by Ellis, Keller and Lifshitz \cite{EKL}
in the regime of extremely close approximation: 
they proved that a set $\A $ whose edge boundary 
is within an additive constant $d$ of the minimum possible
is $O(d)$-close to (an isomorphic copy of)
the unique isoperimetric set.

It is generally more challenging to obtain any 
structural information in an extremal problem
as the distance from the extremum increases.
Kahn and Kalai \cite{KK} made a series of compelling
conjectures around the theme of thresholds of monotone properties,
some of which explore a potential connection
with the stability problem for the
edge-isoperimetric inequality.

One such conjecture, in a strengthened form
proposed by Ellis in \cite[Conjecture 3.3]{Ellis},
suggests that small edge-boundary should imply
some correlation with a large subcube.
Concretely, for any $K>0$ there are $K',\delta$ so that
if $\A \subset \{0,1\}^n$ with $|\A | = \alpha 2^n$ 
and $|\partial _e(\A )| \leq K |\A | \log _2 \aA^{-1}$
then there should be a subcube $C$ of $\{0,1\}^n$ 
of codimension at most $K'\log _2 \aA^{-1}$ 
with $|\A \cap C| \geq (1 + \delta )\alpha |C|$. 
Kahn and Kalai proposed this conjecture
in the special case of monotone properties,
but in the more general setting
of biased measures on the cube.
Kahn and Kalai further conjecture
(see \cite[Conjecture 4.1(b)]{KK},
again for monotone properties)
that such $\A$ must be close
to a union of at most $\alpha ^{-K'}$ cubes.

A weaker form of the latter conjecture follows from a result of Friedgut \cite{Friedgut} in the `dense' regime.
\begin{thm}
	\label{thm: Friedgut}
	Let $K, \varepsilon >0$. Suppose that ${\cal A} \subset \{0,1\}^n$ 
	with $|\partial _e({\cal A})| \leq K2^n$. Then there are 
	disjoint cubes $C_1,\ldots ,C_L$ with 
	$|\A \triangle (C_1\cup \cdots \cup C_{L})| \leq \varepsilon 2^n$, where 
	$L \leq L(K,\varepsilon ) = 2^{2^{C(K/\varepsilon )}}$ for some 
	constant $C>0$.
\end{thm}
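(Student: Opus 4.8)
The plan is to recover this from Friedgut's junta theorem via Fourier analysis on $\{0,1\}^n$, together with the elementary observation that the $1$-set of a Boolean junta is a disjoint union of subcubes. Write $f=\mathbf{1}_{\A}\colon\{0,1\}^n\to\{0,1\}$ and expand $f=\sum_{S\subseteq[n]}\widehat f(S)\chi_S$ in the Fourier--Walsh basis, so that $\ell_2$-distances between Boolean functions record Hamming distances. A direct count shows that the total influence $I(f):=\sum_i\Pr_x[f(x)\ne f(x\oplus e_i)]=\sum_S|S|\widehat f(S)^2$ equals $|\pe(\A)|/2^{n-1}$, so the hypothesis gives $I(f)\le 2K$. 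Fix $k:=\lceil 8K/\eps\rceil$; then the high-degree Fourier mass is already small, since $\sum_{|S|>k}\widehat f(S)^2\le I(f)/k\le\eps/4$.

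The crux is to locate a bounded set $J\subseteq[n]$ of ``influential'' coordinates carrying essentially all of the remaining Fourier mass. Put $p_i:=\sum_{S\ni i}\widehat f(S)^2$ and set $J:=\{i: p_i\ge\tau\}$, where $\tau:=\eps^3 2^{-3k}/(512K^3)$; since $\sum_i p_i=I(f)\le 2K$ we have $|J|\le 2K/\tau$. Every $S$ with $S\not\subseteq J$ meets $[n]\setminus J$, so $\sum_{|S|\le k,\,S\not\subseteq J}\widehat f(S)^2\le\sum_{i\notin J}\sum_{S\ni i,\,|S|\le k}\widehat f(S)^2$, and for fixed $i$ the inner sum is at most $\|(\partial_i f)^{\le k}\|_2^2$, where $\partial_i f(x)=f(x^{i\to 1})-f(x^{i\to 0})\in\{-1,0,1\}$ is the discrete derivative. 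Here hypercontractivity enters: as $\partial_i f$ is $\{-1,0,1\}$-valued we have $\|\partial_i f\|_{3/2}=(\mathbb E|\partial_i f|)^{2/3}=p_i^{2/3}$, while the Bonami--Beckner inequality gives $\|g\|_3\le 2^{k/2}\|g\|_2$ for any degree-$\le k$ function $g$; applying H\"older's inequality to $\|(\partial_i f)^{\le k}\|_2^2=\langle(\partial_i f)^{\le k},\partial_i f\rangle$ then yields $\|(\partial_i f)^{\le k}\|_2^2\le 2^{k}p_i^{4/3}$. Summing, $\sum_{|S|\le k,\,S\not\subseteq J}\widehat f(S)^2\le 2^k\sum_{i\notin J}p_i\cdot p_i^{1/3}\le 2^k\tau^{1/3}\cdot 2K=\eps/4$ by the choice of $\tau$, and combining with the high-degree bound gives $\sum_{S\not\subseteq J}\widehat f(S)^2\le\eps/2$.

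It remains to assemble the junta. Let $g:=\sum_{S\subseteq J}\widehat f(S)\chi_S$, which is the conditional expectation $\mathbb E[f\mid x_J]$ and satisfies $\|f-g\|_2^2=\sum_{S\not\subseteq J}\widehat f(S)^2\le\eps/2$. Round it: set $h:=\mathbf{1}_{\{g\ge 1/2\}}$, a Boolean function depending only on the coordinates in $J$. If $f(x)\ne h(x)$ then $|f(x)-g(x)|\ge 1/2$, so $\Pr_x[f\ne h]\le 4\|f-g\|_2^2\le 2\eps$. The set $\mathcal B:=h^{-1}(1)$ is exactly the union of the subcubes $\{x: x_J=z\}$ over those $z\in\{0,1\}^J$ with $g(z)\ge 1/2$; these are pairwise disjoint, and there are at most $2^{|J|}$ of them. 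Since $|J|\le 2K/\tau=1024K^4 2^{3k}/\eps^3$ with $k=\lceil 8K/\eps\rceil$, we get $|J|\le 2^{C'(K/\eps)}$ for an absolute constant $C'$, hence $L\le 2^{|J|}\le 2^{2^{C'(K/\eps)}}$, while $|\A\triangle\mathcal B|=2^n\Pr_x[f\ne h]\le 2\eps\cdot 2^n$; running the argument with $\eps/2$ in place of $\eps$ gives the theorem.

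The main obstacle is the hypercontractive estimate $\|(\partial_i f)^{\le k}\|_2^2\le 2^k p_i^{4/3}$ in the second step: this is precisely where Bonami--Beckner is needed, and the factor $2^k$ it produces is what forces the threshold $\tau$ — and hence the size of $J$ — to be exponential in $K/\eps$, which is the source of the doubly-exponential bound on $L$. The only other point requiring care is the rounding step, i.e.\ checking that passing from the real-valued conditional expectation $g$ to a genuine Boolean $J$-junta costs only a constant factor in the distance, and that the resulting object is literally a disjoint union of subcubes.
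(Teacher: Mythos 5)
The paper does not actually prove this theorem: it states it as a known result of Friedgut, and the remark after it explains that the statement follows immediately from Friedgut's junta theorem (if $\A$ is $\eps$-close to a $D$-junta, the junta's $1$-set is a disjoint union of at most $2^D$ subcubes). What you have written is, in essence, a reconstruction of Friedgut's original proof of the junta theorem (degree truncation, singling out coordinates with large Fourier weight, the hypercontractive estimate $\|(\partial_i f)^{\le k}\|_2^2 \le 2^k\,p_i^{4/3}$ via H\"older and Bonami--Beckner, then rounding the conditional expectation), followed by the same deduction the paper's remark describes. So the route is conceptually identical to what the paper invokes; you are supplying the contents of the black box rather than taking a different path.

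The argument is sound, but note one normalization issue you should tidy if you write this out carefully: with $f=\mathbf{1}_{\A}\colon\{0,1\}^n\to\{0,1\}$ and the standard characters $\chi_S$, the Fourier formula for influence is $I_i(f) = 4\sum_{S\ni i}\widehat f(S)^2$, not $\sum_{S\ni i}\widehat f(S)^2$; equivalently $\partial_i f = -2\sum_{S\ni i}\widehat f(S)\chi_{S\setminus\{i\}}$ carries a factor of $2$. Consequently $p_i = I_i(f)/4$, $\sum_i p_i = I(f)/4 \le K/2$, and $\|\partial_i f\|_{3/2} = (4p_i)^{2/3}$. These factors of $4$ propagate through the estimate for $\sum_{|S|\le k,\,S\not\subseteq J}\widehat f(S)^2$, but they are absorbed into the choice of $\tau$ and affect only the absolute constant $C$, so the conclusion $L\le 2^{2^{C(K/\eps)}}$ is unaffected. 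The rounding step and the observation that a $J$-junta's $1$-set is a disjoint union of at most $2^{|J|}$ subcubes are both correct as stated.
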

\noindent \emph{Remark:} Friedgut actually proved that given such $\A $ there is set $S \subset [n]$ with $|S| \leq  D:= 2^{C(K/\varepsilon )}$ so that ${\cal A}$ is $\varepsilon$-approximated by disjoint cubes, all of whose fixed coordinate sets lie in $S$ (often stated as ${\cal A}$ is $\varepsilon $-close to a $D$-Junta). Theorem \ref{thm: Friedgut} is an immediate consequence of this.

Our main result gives an analogue of Friedgut's theorem that also applies in the sparse regime.

\begin{thm}
	\label{dimension free stability}
Let $K, \varepsilon >0$. Suppose that $\A \subset V(Q_n)$ with $|\partial _e(\A )| \leq |\A | \big (\log _2(2^n/|\A |) + K \big )$. Then there are disjoint cubes $C_1,\ldots ,C_{L}$ with $|\A \triangle (C_1\cup \cdots \cup C_{L})| \leq \varepsilon |\A |$, where $L \leq L(K,\varepsilon ) = 2^{2^{C(K/\varepsilon )^2}}$ for some constant $C>0$.
\end{thm}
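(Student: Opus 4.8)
\medskip
\noindent\textbf{Proof plan.} The strategy is to prove Theorem~\ref{dimension free stability} by \emph{peeling}: repeatedly remove from $\A$ a large subcube on which $\A$ is nearly extremal, recurse on the remainder, and stop once all but an $\varepsilon$-fraction of $\A$ has been accounted for. Write $\mathrm{def}(\A)=|\pe(\A)|-|\A|\log_2(2^n/|\A|)$, so Theorem~\ref{thm: edge iso} says $\mathrm{def}(\A)\ge0$ and the hypothesis is $\mathrm{def}(\A)\le K|\A|$. The elementary engine of the peeling is a defect identity: if $C$ is a subcube with $C\subseteq\A$ and $\rho=|C|/|\A|$, then, using $|\pe(\A\setminus C)|=|\pe(\A)|-c|C|+2e(C,\A\setminus C)$ (where $c$ is the codimension of $C$ and $e(C,\A\setminus C)$ is the number of $Q_n$-edges from $C$ to $\A\setminus C$) together with $c=\log_2(2^n/|C|)$, one checks that
\[
  \mathrm{def}(\A\setminus C)\ \le\ \mathrm{def}(\A)\ -\ |\A|\,H(\rho)\ +\ 2\,e(C,\A\setminus C),
\]
where $H$ is the binary entropy function. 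The point is that \emph{all} the $\log_2(2^n/|\A|)$ terms cancel: the residual defect \emph{per element} is controlled purely by $K$, by $\rho$, and by the normalized leakage $e(C,\A\setminus C)/|\A|$, with no dependence on the density $|\A|/2^n$. In particular, if $e(C,\A\setminus C)\le\tfrac12\big(H(\rho)-K\rho\big)|\A|$ then $\mathrm{def}(\A\setminus C)\le K|\A\setminus C|$, so the hypothesis is exactly preserved along the peeling.

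\medskip
\noindent\textbf{The main lemma and the iteration.} The substantive input is a structural lemma: if $\mathrm{def}(\A)\le K|\A|$ then $\A$ contains a subcube $C$ that is simultaneously (a) a definite fraction of $\A$, $|\A\cap C|\ge\rho_0(K,\varepsilon)|\A|$; (b) almost extremal inside $C$, $\mathrm{def}_C(\A\cap C)\le\varepsilon'|\A\cap C|$ for a constant $\varepsilon'=\varepsilon'(\varepsilon)$ below the threshold of Theorem~\ref{thm: Ellis}; and (c) almost isolated, $e(C,\A\cap C^c)\le\tfrac12\big(H(\rho)-K\rho\big)|\A|$ with $\rho=|\A\cap C|/|\A|$. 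Granting this, one step of the iteration is: apply Theorem~\ref{thm: Ellis} inside $C$ to find a subcube $C'\subseteq C$ with $|(\A\cap C)\triangle C'|$ at most a tiny fraction of $|\A\cap C|$; record $C'$; and recurse on $\A\setminus C'$, which by (c) and the displayed inequality — absorbing the small Ellis error into a slight increase of $K$ — again satisfies the hypothesis. Each step removes at least a $\rho_0$-fraction of the current set and only a negligible amount of ``junk'', so after $O(\rho_0^{-1}\log\varepsilon^{-1})$ steps the recorded cubes cover all but an $\varepsilon$-fraction of $\A$; since they are pairwise disjoint we obtain $|\A\triangle(C_1\cup\cdots\cup C_L)|\le\varepsilon|\A|$ with $L=L(K,\varepsilon)$. (When $\A$ is already dense, say $|\A|/2^n\ge\alpha_0(K,\varepsilon)$, the hypothesis reads $|\pe(\A)|\le K'2^n$ and Theorem~\ref{thm: Friedgut} applies directly; the general case is reduced to this by the lemma, whose proof boosts the density by restricting to $C$. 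Tracking constants — Friedgut's doubly-exponential bound fed a parameter of size $O((K/\varepsilon)^2)$, which also governs the doubly-exponentially small $\rho_0$ and hence the number of peeling steps — yields $L\le 2^{2^{C(K/\varepsilon)^2}}$.)

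\medskip
\noindent\textbf{Main obstacle.} The hard step is the structural lemma: that near-extremality forces a large, almost-full, almost-isolated subcube. Its content is to rule out $\A$ being ``globally spread out'': a set that genuinely fails to concentrate on any proper subcube of non-negligible relative size must have edge-boundary quantitatively above the extremal value, so $\A$ must concentrate on some subcube, on which its density strictly increases; iterating this concentrate-or-boost dichotomy drives the density up until $\A$ is dense, at which point Theorem~\ref{thm: Friedgut} (or, once $\mathrm{def}$ per element is below $\varepsilon'$, Theorem~\ref{thm: Ellis}) locates the required subcube. Making ``spread out $\Rightarrow$ large boundary'' quantitative — a density-sensitive, robust form of the edge-isoperimetric inequality, of the kind supplied by hypercontractivity / global-function estimates on the cube — is where the real work and the $(K/\varepsilon)^2$ loss sit. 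The remaining steps (the defect and volume accounting, checking that the accumulated Ellis approximation errors total at most $\varepsilon|\A|$ over the $O(\rho_0^{-1}\log\varepsilon^{-1})$ peeling steps, and reconciling the near-containment actually produced with ``$C\subseteq\A$'' in the displayed inequality) are routine.
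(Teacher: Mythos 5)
Your ``defect identity'' is correct and is a nice observation: a direct calculation confirms that for a subcube $C\subseteq\A$ of codimension $c$ and $\rho=|C|/|\A|$ one has the exact identity $\mathrm{def}(\A\setminus C)=\mathrm{def}(\A)-|\A|H(\rho)+2e(C,\A\setminus C)$, with all density terms cancelling. But beyond this accounting step, the proposal does not contain a proof. Everything is hung on the ``structural lemma'' --- that any $\A$ with $\mathrm{def}(\A)\le K|\A|$ contains a subcube $C$ that is (a) a $\rho_0(K,\varepsilon)$-fraction of $\A$, (b) nearly extremal inside $C$, and (c) nearly isolated --- and you explicitly do not prove it; you only gesture that ``spread out $\Rightarrow$ large boundary'' should follow from hypercontractivity. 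That lemma is essentially the entire theorem. It is substantially \emph{stronger} than what the paper actually establishes, and it is not at all clear how to get it in one shot. The paper's corresponding result (Theorem~\ref{coordinate of large influence}) produces only a \emph{single coordinate} $i$ with $I_i(\mathbf{1}_\A)\ge 2^{-C(K+1)^2}|\A|/2^n$, not a large subcube on which $\A$ is almost full. Proving even this weaker statement takes four sections: Shearer's inequality gives a partition of $[n]$ on which both the mutual information and the isoperimetric excess of sections are controlled (Lemma~\ref{lemma: sectional control}); this yields an approximate product structure (Lemma~\ref{control of the conditional entropy implies product-like sizes}); Polyanskiy's hypercontractive estimate shows typical low-dimensional sections of $\A$ remain sparse (Lemma~\ref{control of small cube sizes}); and only then, combining all of these with Ellis's theorem inside a well-chosen section and Talagrand's inequality in the dense regime, does one extract one influential coordinate. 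Upgrading ``one influential coordinate'' to ``one almost-full subcube containing a constant fraction of $\A$'' is precisely the kind of leap that requires an argument, not an assertion.

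The global iteration is also genuinely different from the paper's, and some of the steps you call ``routine'' are where a peeling argument tends to die. The paper recurses by conditioning on the influential coordinate, splitting into $\A^-$ and $\A^+$; this keeps the two halves in disjoint half-cubes, so disjointness of the output cubes is automatic, and the induction bookkeeping is set up so that the quantity $K/\varepsilon$ either strictly decreases (by a fixed $E=2^{-2C_2(K/\varepsilon+1)^2}$) at the cost of doubling the cube budget, or stays bounded while $|\A|$ drops. In your peeling scheme: (i) the cubes produced by the recursive call on $\A\setminus C'$ live in all of $\{0,1\}^n$ and there is no reason they avoid $C'$, so disjointness is not free and would need a separate argument; (ii) $C'$ from Ellis's theorem only approximates $\A\cap C$, so it is not contained in $\A$, and your defect identity needs the containment --- the correction terms feed into $K$; (iii) you absorb the Ellis error into ``a slight increase of $K$'' at each step, but you run $\Theta(\rho_0^{-1}\log\varepsilon^{-1})$ steps with $\rho_0$ doubly exponentially small, so without a decreasing budget for the allowed increase, $K$ (and hence the threshold for applying Theorem~\ref{thm: Ellis}) can drift out of range. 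None of these is obviously fatal, but none is addressed. In short: the shape of the peeling argument is plausible and the defect identity is a genuine insight, but the proposal skips the theorem's core (the structural lemma) and leaves the convergence and disjointness of the iteration unverified, whereas the paper solves both by a different and considerably more delicate route.
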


\noindent \textit{Remark:} Letting $E(\A )$ denote the set of edges in $\A $, 
i.e.\ $E(\A) = \{xx' \in E(Q_n): x,x' \in \A \}$, Theorem \ref{thm: edge iso} is equivalent to 
$|E(\A)| \leq |\A | (\log _2 |\A |) /2$. In this setting, Theorem \ref{dimension free stability} says that if $\A  \subset \{0,1\}^n$ with $|E(\A )| \geq |\A | (\log _2|\A | - K)/2$ then $\A $ can be $\varepsilon $-approximated by at most $L(K, \varepsilon )$ subcubes. In this sense, Theorem \ref{dimension free stability} gives a `dimension free' stability theorem. 

We conclude this introduction with a brief outline of our argument,
and how the paper will be organised to implement this.
Most of the proof is geared towards showing that $\A$
has a coordinate of significant influence.
This exploits the connection between edge-boundary
and the influences of Boolean functions, which we
will discuss in the next section, together with
two inequalities (due to Talagrand and to Polyanskiy)
that we will use in our proof.
The starting point of our strategy is to choose 
an appropriate partition of the coordinate set,
such that we maintain control on two important quantities:
the constant $K$ appearing in Theorem \ref{dimension free stability}
(which we call the \emph{isoperimetric excess} of ${\cal A}$)
and a certain `mutual information' quantity
(in the sense of information theory).
In section 3 we prove a partitioning lemma
that will enable us to control both these quantities.
The mutual information is then used in section 4
to show that $\A$ is `product-like' in a certain sense. 
The control on the isoperimetric excess will be such that we can 
apply Ellis's theorem to approximate certain sections of $\A$ by cubes,  
provided that they are not too dense. To address the latter point 
(density of sections), in section 5 we apply 
Polyanskiy's hypercontractive inequality to show that $\A$ 
is typically not too dense in random subcubes
(this result can be viewed as a sparse variant of 
the ``It Ain't Over Till It's Over'' conjecture,
proved by Mossel, O'Donnell and Oleszkiewicz \cite{moo}).
The results of the previous sections are combined 
in section 6 in finding a coordinate of significant influence.
This is the main ingredient of an inductive proof of
our main theorem, given in the final section.

\section{Influences of Boolean functions}

Edge boundary has a natural reformulation in terms 
of the analysis of Boolean functions,
which is an active area in its own right,
with many applications to other fields,
including Social Choice and Computational Complexity;
we refer the reader to the book \cite{o'd} for an introduction.
While our approach in this paper will be generally
combinatorial rather analytical, we will require
some auxiliary results obtained by these analytic means.

To discuss this connection we require some notation and terminology.
Given $f: \{0,1\}^n \to {\mathbb R}$, let ${\mathbb E}(f) = 2^{-n} \sum _{x \in \{0,1\}^n} f(x)$ and ${\mathbb V}ar(f) = {\mathbb E}(f - {\mathbb E}(f))^2$, the expectation and variance of $f$ respectively. The function $f$ is said to be Boolean if $f: \{0,1\}^n \to \{0,1\}$. Subsets of $V(Q_n)$ are naturally identified with Boolean functions, where a set $\A \subset \{0,1\}^n$ corresponds to the indicator function $1 _{\A}$, with $1_{\cal A}(x) = 1$ if $x\in \A$ and $0$ otherwise. Given $x \in \{0,1\}^n$ and $i\in [n]$ let $x \oplus e_i$ denote the element of $V(Q_n)$ obtained by changing the $i$th coordinate of $x$. The \emph{influence of $f$ in direction $i$} is defined as $I_i(f)  :=  {|\{x \in \{0,1\}^n: f(x) \neq f(x \oplus e_i)\}|} /{2^n}$.
The \emph{influence of} $f$, denoted $I(f)$, is simply the sum of the individual influences, 
i.e.\ $I(f) = \sum _{i\in [n]} I_i(f)$. Thus $I_i(f)$ denotes the proportion of edges in direction $i$ whose vertices disagree under $f$, and so $I(1_{\cal A}) = |\pe (\A )| / 2^{n-1}$ for all $\A $. Thus any statement regarding the edge boundary of $\A$ is equivalent to a statement on the influence of $1_{\cal A}$.

The notion of influence was first introduced by Ben-Or and Linial  \cite{BL} 
in the context of social choice theory. They conjectured that any Boolean function 
$f:\{0,1\}^n \to \{0,1\}$ with ${\mathbb E}(f) = 1/2$ satisfies $\max _{i\in [n]} I_i(f) = 
\Omega \big ( (\log n) /n \big )$. This was later established by the fundamental KKL theorem of
Kahn, Kalai and Linial \cite{KKL}, who proved that such $f$ satisfy $\sum _{i\in [n]} I_i(f)^2 =
\Omega \big ( (\log^2 n) / n \big )$. The following related inequality, 
that we will use in this paper, was given by Talagrand \cite{Tal}.

\begin{thm}
	\label{thm: Talagrand}
	Any Boolean function $f: \{0,1\}^n \to \{0,1\}$ satisfies 
	$\sum _{i\in [n]} \frac {I_i(f)}{1 - \log _2 I_i(f)} 
	\geq c .{\mathbb V}ar(f)$, where $c>0$ is a constant.
\end{thm}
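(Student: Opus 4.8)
The plan is to deduce Theorem~\ref{thm: Talagrand} from the Bonami--Beckner hypercontractive inequality by the Fourier-analytic method of Kahn, Kalai and Linial, but with the degree cutoff chosen separately for each coordinate. Expand $f = \sum_{S \subseteq [n]} \widehat f(S)\,\chi_S$ in the Fourier--Walsh basis, $\chi_S(x) = (-1)^{\sum_{i \in S} x_i}$, and for each $i \in [n]$ consider $g_i \colon \{0,1\}^n \to \{-1,0,1\}$ given by $g_i(x) := f(x) - f(x \oplus e_i)$. Two elementary facts drive the argument. First, a direct computation with $\chi_S$ shows $\widehat{g_i}(S) = 2\widehat f(S)$ if $i \in S$ and $\widehat{g_i}(S) = 0$ otherwise. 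Second --- and this is the only place the hypothesis that $f$ is Boolean enters --- $g_i$ is supported precisely on $\{x : f(x) \ne f(x \oplus e_i)\}$, a set of measure $I_i(f)$, so that $\|g_i\|_q^q = I_i(f)$ for \emph{every} $q \ge 1$; in particular $4\sum_{S \ni i}\widehat f(S)^2 = \|g_i\|_2^2 = I_i(f)$. Writing $\mu_i := I_i(f)$ and using ${\mathbb V}ar(f) = \sum_{S \ne \es}\widehat f(S)^2$, I would then split the variance across coordinates:
\[
{\mathbb V}ar(f) \;=\; \sum_{S \ne \es}\widehat f(S)^2 \;=\; \sum_{i \in [n]} \sum_{S \ni i} \frac{\widehat f(S)^2}{|S|} \;=:\; \sum_{i \in [n]} W_i .
\]
It then suffices to prove the per-coordinate bound $W_i \le C\,\mu_i/(1 - \log_2 \mu_i)$ for an absolute constant $C$: summing over $i$ gives the theorem with $c = 1/C$ (coordinates with $\mu_i = 0$ contribute nothing).

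To bound a single $W_i$, apply the hypercontractive inequality $\|T_\rho g\|_2 \le \|g\|_{1 + \rho^2}$ to $g = g_i$ with $\rho^2 = \tfrac12$. Reading off Fourier coefficients and using $\|g_i\|_{3/2}^2 = \mu_i^{4/3}$, this gives $\sum_{S \ni i} 2^{-|S|}\widehat f(S)^2 \le \tfrac14 \mu_i^{4/3}$, and hence for every integer $k \ge 0$
\[
\sum_{\substack{S \ni i \\ |S| \le k}} \widehat f(S)^2 \;\le\; 2^k \sum_{\substack{S \ni i \\ |S| \le k}} 2^{-|S|}\widehat f(S)^2 \;\le\; \tfrac14\, 2^{k}\,\mu_i^{4/3}.
\]
Now split $W_i$ at the degree threshold $k_i := \lfloor \tfrac14 \log_2(1/\mu_i) \rfloor$. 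On the low-degree part use $1/|S| \le 1$ together with the displayed estimate; on the high-degree part use $1/|S| < 1/k_i$ together with $\sum_{S \ni i}\widehat f(S)^2 \le \mu_i$. This yields
\[
W_i \;\le\; \tfrac14\, 2^{k_i}\,\mu_i^{4/3} + \frac{\mu_i}{k_i} \;\le\; \mu_i\Big( 2^{-\frac{1}{12}\log_2(1/\mu_i)} + \frac{1}{k_i}\Big),
\]
where the last inequality uses $k_i \le \tfrac14 \log_2(1/\mu_i)$. Since $2^{-x} = O(1/x)$ and $k_i$ is of order $\log_2(1/\mu_i)$, both terms on the right are $O\big(\mu_i/(1 - \log_2 \mu_i)\big)$, which is the required bound. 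The narrow range where $k_i$ would be $0$, i.e.\ $\mu_i > 1/16$, is handled directly, since there $W_i \le \mu_i$ while $1 - \log_2 \mu_i < 5$.

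The one genuinely non-routine point --- the step I expect to need the most care --- is precisely this coordinate-dependent choice of cutoff $k_i$ of order $\log(1/\mu_i)$. A single global degree cutoff only recovers the much weaker bound ${\mathbb V}ar(f) \le \sum_i \mu_i = I(f)$; the logarithmic improvement comes entirely from hypercontractivity forcing a definite proportion of the Fourier weight of $g_i$ onto levels above $\sim\log(1/\mu_i)$, and the constant $\tfrac14$ in the definition of $k_i$ is what balances this gain against the $1/k_i$ loss on the high-degree tail. Everything else --- the two Fourier identities for $g_i$, the coordinate decomposition of the variance, and the arithmetic in the displays above --- is standard bookkeeping.
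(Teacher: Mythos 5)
Theorem~\ref{thm: Talagrand} is not proved in the paper: it is imported as a black box from Talagrand \cite{Tal}, so there is no in-paper argument to compare against. Your proposal is a correct, self-contained proof, and it is essentially the standard hypercontractive argument behind Talagrand's inequality: the Fourier identities for the discrete derivative $g_i$, the decomposition ${\mathbb V}ar(f)=\sum_i\sum_{S\ni i}\widehat f(S)^2/|S|$, the use of $\|T_{\rho}g_i\|_2\le\|g_i\|_{1+\rho^2}$ with $\|g_i\|_q^q=I_i(f)$ (the only place Booleanness is used), and the coordinate-dependent cutoff $k_i\asymp\log_2(1/\mu_i)$ are exactly the ingredients of the original proof. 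The quantitative steps check out: $\tfrac14 2^{k_i}\mu_i^{4/3}\le\mu_i^{13/12}$, the tail bound $\mu_i/k_i$, and the boundary cases $\mu_i=0$, $\mu_i>1/16$ (where $k_i=0$) are all handled correctly, so the per-coordinate bound $W_i=O\big(\mu_i/(1-\log_2\mu_i)\big)$ and hence the theorem follow.
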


An important tool in the proof of the KKL theorem (and many results in this area)
is hypercontractivity of the noise operator, due to Bonami \cite{Bon} and
Beckner \cite{Beckner} (see also \cite[Chapter 9]{o'd}).
(An alternative approach based on martingales
and the log-Sobolev inequality for the cube 
was given by Falik and Samorodnitsky \cite{FS} and Rossignol \cite{R}.)
Hypercontractivity will also be important for us in this paper,
via the following estimate for spherical averages due to Polyanskiy \cite{poly}.

For $p \in [1,\infty]$ let $L_p(\{0,1\}^n)$ denote the set of functions $f:\{0,1\}^n \to {\mathbb R}$ equipped with the norm $\|\cdot \|_p$ where $\|f\|_p = (2^{-n} \sum _{x\in \{0,1\}^n} |f(x)|^p )^{1/p}$. For $p = 2$, the space $L_2(\{0,1\}^n)$ also forms a Hilbert space equipped with the usual inner product, given by $\langle f , g \rangle  = \frac {1}{2^n} \sum _{{x} \in \{0,1\}^n} f({x}) g({x})$. Writing $d_H(x,\widetilde x)$ for the Hamming distance between $x$ and $\widetilde {x}$, let $S_{\ell }: L_2(\{0,1\}^n) \to L_2(\{0,1\}^n)$ denote the 
linear operator acting on $f \in L_2(\{0,1\}^n)$ pointwise by 
\begin{equation}
	\label{equation: hypercontractive operator}
	S_{\ell}(f)({x}) = \frac {1}{\binom {n}{\ell } }
				\sum _{\widetilde {x} : d_H({x}, \widetilde {x}) = \ell } f({\widetilde {x}}).
\end{equation}
Polyanskiy proved the following result in \cite{poly} (see Theorem 1, with the remark following it).

\begin{thm}
	\label{thm: Polyansky}
	Let $\ell \in [0,0.15n] \cap {\mathbb N}$. Then for any 
	$f: \{0,1\}^n \to \mathbb R$
		\begin{equation*}
				\|S_{\ell }(f)\| _2 \leq 2^{1/2} \| f\|_{1+{(1-2\ell / n )^2}}.
		\end{equation*}
\end{thm}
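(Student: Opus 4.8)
The plan is to deduce the bound from the sharp hypercontractivity of the Bonami--Beckner noise operator $T_\rho$ — which acts on characters by $T_\rho \chi_S = \rho^{|S|}\chi_S$ and satisfies $\|T_\rho g\|_2 \le \|g\|_{1+\rho^2}$ for all $g$ and all $\rho \in [-1,1]$ — by expressing $S_\ell$ as a superposition of noise operators. Both $S_\ell$ and the $T_\rho$ are simultaneously diagonalised by the characters: one has $S_\ell \chi_S = \big(K_\ell(|S|)/\binom{n}{\ell}\big)\chi_S$, where $K_\ell$ is the Krawtchouk polynomial, and concretely $K_\ell(k)/\binom{n}{\ell} = \mathbb{E}\big[(-1)^{|Z\cap S|}\big]$ for $Z$ a uniformly random weight-$\ell$ subset of $[n]$ and $|S|=k$. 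Setting $\rho^{*} := 1 - 2\ell/n \in (0,1]$, the crux of the argument is to show that $S_\ell$ can be written as $\int_{-\rho^{*}}^{\rho^{*}} T_\rho\, d\nu_\ell(\rho)$ for a signed Borel measure $\nu_\ell$ supported on $[-\rho^{*},\rho^{*}]$ with total variation $\|\nu_\ell\|_{\mathrm{TV}} \le 2^{1/2}$; this amounts to solving the (truncated, $k = 0,\dots,n$) moment problem $\int \rho^{k}\,d\nu_\ell(\rho) = K_\ell(k)/\binom{n}{\ell}$ on the interval $[-\rho^{*},\rho^{*}]$, with control on the total mass of the positive and negative parts.

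Given such a representation the theorem is immediate. For any $f$, Minkowski's integral inequality followed by hypercontractivity of each $T_\rho$ gives
\[
\|S_\ell f\|_2 \;\le\; \int_{-\rho^{*}}^{\rho^{*}} \|T_\rho f\|_2\, d|\nu_\ell|(\rho)\;\le\; \int_{-\rho^{*}}^{\rho^{*}} \|f\|_{1+\rho^2}\, d|\nu_\ell|(\rho)\;\le\; \|f\|_{1+(\rho^{*})^2}\cdot\|\nu_\ell\|_{\mathrm{TV}},
\]
using that $p \mapsto \|f\|_p$ is non-decreasing on the probability space $\{0,1\}^n$ and that $|\rho|\le\rho^{*}$ on $\operatorname{supp}\nu_\ell$. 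Since $1 + (\rho^{*})^2 = 1 + (1-2\ell/n)^2$, this is exactly the asserted inequality with constant $2^{1/2}$. (The moment constraint at $k=0$ forces $\int d\nu_\ell = 1$, so $\|\nu_\ell\|_{\mathrm{TV}} \ge 1$ in any case: the content is that $\nu_\ell$ can be taken only mildly signed.)

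The main obstacle is precisely the construction of $\nu_\ell$ with total variation $\le 2^{1/2}$ uniformly in $n$, and this is where the hypothesis $\ell \le 0.15\,n$ is used. The conceptual picture is that $S_\ell$ is exactly $T_{\rho^{*}}$ conditioned on the noise pattern having Hamming weight precisely $\ell$ — the most likely value, since a $\rho^{*}$-correlated copy of $x$ differs from $x$ in $\mathrm{Bin}(n,\ell/n)$ coordinates, of mean $\ell$ — and one expects conditioning on this mode-event to distort the operator only by a bounded amount when $\ell/n$ is bounded away from $1/2$. Making this quantitative is the heart of the proof, and I would attempt it either (i) by writing $\nu_\ell$ down explicitly as a discrete measure on well-chosen nodes of $(-\rho^{*},\rho^{*})$ — Chebyshev-type points, or points tied to the zeros of the associated Krawtchouk/Hahn family — and bounding the weights via the hypergeometric representation of the normalised Krawtchouk polynomials; or (ii) by dualising, since the least total variation of a representing measure on $[-\rho^{*},\rho^{*}]$ equals $\sup\big\{\sum_{k=0}^n c_k\mu_k : \|\sum_k c_k\rho^k\|_{L^\infty([-\rho^{*},\rho^{*}])}\le 1\big\}$ with $\mu_k = K_\ell(k)/\binom{n}{\ell}$, and one would bound this extremal quantity over degree-$\le n$ polynomials bounded on $[-\rho^{*},\rho^{*}]$. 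Either route reduces everything to a sharp comparison of the Krawtchouk spectrum $\big(K_\ell(k)/\binom{n}{\ell}\big)_{k}$ of $S_\ell$ with the power spectrum $(\rho^{k})_{k}$ of the noise semigroup. The delicacy is that these agree only approximately, and the discrepancy is worst on the high Fourier levels — for instance $K_\ell(n)/\binom{n}{\ell} = (-1)^\ell$ has modulus $1$ while $(\rho^{*})^n$ is exponentially small — so no termwise comparison is available; the high-level contribution has to be absorbed by the representing-measure device rather than estimated crudely, and it is this mechanism that breaks down (the measure being forced outside $[-\rho^{*},\rho^{*}]$, or its total variation exceeding $2^{1/2}$) once $\ell/n$ exceeds the threshold $0.15$.
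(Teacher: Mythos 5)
Your reduction step (Minkowski plus $\|T_\rho g\|_2 \le \|g\|_{1+\rho^2}$ plus monotonicity of $p \mapsto \|\cdot\|_p$) is fine, but the entire content of the theorem has been pushed into the unproved lemma that $S_\ell = \int_{-\rho^*}^{\rho^*} T_\rho \, d\nu_\ell(\rho)$ for a signed measure supported on $[-\rho^*,\rho^*]$, $\rho^* = 1-2\ell/n$, with $\|\nu_\ell\|_{\mathrm{TV}} \le 2^{1/2}$ --- and that lemma is not merely missing, it is false. Testing the proposed identity on the top character $\chi_{[n]}$ forces the degree-$n$ moment condition $\int \rho^n\, d\nu_\ell(\rho) = K_\ell(n)/\binom{n}{\ell} = (-1)^\ell$, while any measure supported on $[-\rho^*,\rho^*]$ satisfies $\big|\int \rho^n\, d\nu_\ell\big| \le \|\nu_\ell\|_{\mathrm{TV}}\,(\rho^*)^n$. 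Hence $\|\nu_\ell\|_{\mathrm{TV}} \ge (1-2\ell/n)^{-n} \ge e^{2\ell}$, which already exceeds $2^{1/2}$ for every $\ell \ge 1$ and grows exponentially in $n$ in the regime $\ell = \Theta(n)$ that the theorem is really about. Your own dual formulation makes the same point instantly: the polynomial $\rho^n/(\rho^*)^n$ has sup norm $1$ on $[-\rho^*,\rho^*]$ and pairs with the moment sequence to give $(\rho^*)^{-n}$, so the minimal total variation of any exact representing measure is exponentially large. You flag the high-level discrepancy yourself, but the hope that it can be ``absorbed by the representing-measure device'' is exactly what the moment obstruction rules out: an exact operator identity with support strictly inside $(-1,1)$ cannot reproduce an eigenvalue of modulus $1$ at level $n$. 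Any repair must abandon the exact mixture representation --- e.g.\ treat the high Fourier levels separately (exploiting the Krawtchouk reflection symmetry $K_\ell(n-k) = (-1)^\ell K_\ell(k)$ and spending part of the slack factor $2^{1/2}$ on them), or allow mass near $\pm 1$ and argue differently there --- which is a genuinely different and substantially harder argument than the one sketched.

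For comparison with the paper: there is nothing internal to compare against, since the paper does not prove this statement at all --- it is quoted as Theorem 1 (with the following remark) of Polyanskiy's paper \cite{poly}, whose proof proceeds by a direct analysis of the spherical averaging operator rather than by expressing $S_\ell$ as a bounded-variation mixture of noise operators. As it stands, your proposal is a conditional reduction to a false intermediate claim, so it does not constitute a proof of the theorem.
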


While we do not need it for this paper, we should also remark that 
the threshold conjectures of Kahn and Kalai are intimately
connected via Russo's lemma \cite{russo} to the large
literature on influences under $p$-biased measures, 
which can be viewed as a weighted edge boundary
(see e.g.\ \cite{B, FrShThres, FK,H,Tal}).

\section{A partitioning lemma}

In this section we establish some notation
for partitions of the coordinate set
and the corresponding sections of $\A$
that will be used throughout the paper.
We also prove a lemma which shows that
lower-dimensional sections of $\A$
tend to have smaller isoperimetric excess than $\A$,
and also bounds a certain `mutual information' 
that will be used in the next section
to show that $\A$ has an approximate product structure.

Given $x = (x_i)_{i\in [n]} \in \{0,1\}^{n}$ 
and a set $I \subset [n]$ the $I$-\emph{restriction} of $x$ is the vector
$x_I = (x_i)_{i\in I} \in \{0,1\}^{I}$. Given a partition $[n] = I_1 \cup \cdots \cup I_M$ and vectors $x^{(m)} \in \{0,1\}^{I_m}$ for all $m\in [M]$, let 
$x^{(1)}\circ \cdots \circ x^{(M)} \in \{0,1\}^n$ denote the \emph{concatenation} of $x^{(1)},\ldots ,x^{(M)}$, the unique vector $y \in \{0,1\}^n$ with $y_{I_m} = x^{(m)}$ for all $m\in [M]$. Note that $x = x_{I_1} \circ \cdots \circ x_{I_M}$ for all $x \in \{0,1\}^n$.

Let $\dir : E(Q_n) \to [n]$ denote the function with $\dir (xx') = i$ if $x' = x \oplus e_i$. For $I \subset [n]$ and ${\cal A} \subset \{0,1\}^n$ the set $\partial ^I_e(\A) := \big \{xx' \in \partial _e(\A ): \dir (xx') \in I \big \}$ is the $I$-\emph{edge boundary} of $\A$.
Clearly $\partial ^{[n]}_e(\A ) = \partial _e(\A )$. Given a partition $I \cup J = [n]$ and ${y}\in \{0,1\}^J$ the \emph{${y}$-section of $\A $} is the set 
	\begin{equation*}
		\A ^I_{ y} 
				:= 
		\{{z}\in \{0,1\}^{I} : {y} \circ {z} \in \A \} \subset \{0,1\}^{I}.
	\end{equation*} 
The contributions from different sections give $\partial ^{I}_e (\A ) = \bigcup _{{y} \in \{0,1\}^{J} } \partial_e (\A ^I_{y})$.

 Given a set $I \subset [n]$, with 
	complement $J = [n] \setminus  I$, let:
	\begin{itemize}
		\item 
		${\alpha }^{I} = (\alpha ^{I}_{y})$ be 
		the probability distribution on $\{0,1\}^{J}$, with 
		$\alpha ^{I}_{y} = |\A ^{I}_{y}| / |\A |$ for all 
		$y \in \{0,1\}^{J}$;
		\item 
		$|\partial ^I_e ({\A ^{I}_{y}})| = 
		| {\A ^{I}_{y}} | \big ( \log _2 (2^{|I|}/|{\A ^{I}_{y}}|) 
		+ {K^{I}_{y}} \big ) $ for all $y \in \{0,1\}^I$, and set $K^{I} = \sum _{{y}} \alpha ^{I}_{y} K^{I}_{y}$.
	\end{itemize}
Note in particular that $\alpha^\es$ is uniformly distributed on $\A$,
i.e.\ $\alpha^\es(x)$ is $1/|\A|$ if $x \in \A$ or $0$ otherwise.

These section sizes can be naturally reformulated in terms of the following random variables.
Consider selecting $x \in \A $ uniformly at random.
Let ${\bf X}_i$ for $i \in [n]$ denote the random variable ${\bf X}_i(x) = x_i$. 
Write ${\bf X}_I = ({\bf X}_i)_{i\in I}$ for $I \subset [n]$.
Then ${\bf X}_J$ satisfies ${\mathbb P}({\bf X}_J = y) = \alpha^I_y$.

We will see that the entropy of these random variables appears naturally 
in the edge-isoperimetric problem. First we recall some standard definitions
(for an introduction to information theory see the book \cite{ct}).
Given a probability distribution ${\bf p} = (p_{\omega})_{\omega \in \Omega }$ on a finite set $\Omega$, the binary entropy of ${\bf p}$ is given by $H({\bf p}) = - \sum _{\omega \in {\Omega }} p_{\omega } \log _2 p_{\omega }$. Given $\gamma \in [0,1]$ we will also sometimes write 
$H(\gamma )$ for the binary entropy of the probability distribution $\{\gamma , 1- \gamma \}$, i.e.\ $H(\gamma ) = -\gamma \log _2 \gamma - (1-\gamma ) \log _2 (1 - \gamma )$. The entropy of a random variable ${\bf X}$, denoted $H({\bf X})$, taking values in $\Omega $ is the entropy of its probability mass function, i.e.\ $H({\bf X}) = H({\bf p})$ where ${\bf p} = (p_{\omega })_{\omega \in {\Omega }}$ and $p_{\omega } = {\mathbb P}({\bf X} = \omega )$.

We will use the following entropy inequality of Shearer 
(see \cite{cfgs} or Chapter 15 \cite{aands}). We say that a family of sets 
${\cal S} = \{S_m\}_{m\in [M]}$ forms a $D$-cover of $[n]$ if every $j\in
[n]$ appears in at least $D$ sets from ${\cal S}$. 

\begin{thm}
	\label{thm: Shearer}
	Let ${\bf X} = ({\bf X}_i)_{i\in [n]}$ be a random variable taking 
	values in a finite set ${\Omega}$ and let ${\bf X}_S$ denote the random variable ${\bf X}_S = (X_i)_{i\in S}$ for all $S \subset [n]$. Then given a $D$-cover ${\cal S}$ of $[n]$, we have $\sum _{S \in {\cal S}} H({\bf X}_{S}) \geq D. H({\bf X})$.
\end{thm}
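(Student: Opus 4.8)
The plan is to derive this directly from the chain rule for entropy together with the fact that conditioning cannot increase entropy (both standard facts, as in \cite{ct}). First I would fix the natural ordering $1 < 2 < \cdots < n$ of $[n]$ and, for each $i \in [n]$, set $h_i := H({\bf X}_i \mid {\bf X}_1, \ldots , {\bf X}_{i-1})$, the conditioning being vacuous when $i=1$. Since ${\bf X}$ takes values in a finite set, each $h_i$ is a well-defined non-negative real, and the chain rule gives $H({\bf X}) = \sum_{i \in [n]} h_i$.

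Next I would fix a set $S = \{s_1 < s_2 < \cdots < s_k\} \in {\cal S}$ and apply the chain rule again, now to the tuple ${\bf X}_S$ in the induced order:
$$H({\bf X}_S) = \sum_{j=1}^k H\big({\bf X}_{s_j} \mid {\bf X}_{s_1}, \ldots , {\bf X}_{s_{j-1}}\big).$$
The crucial observation is that $\{s_1, \ldots , s_{j-1}\} \subseteq \{1, \ldots , s_j - 1\}$, so conditioning on the larger index set can only decrease entropy, whence
$$H\big({\bf X}_{s_j} \mid {\bf X}_{s_1}, \ldots , {\bf X}_{s_{j-1}}\big) \geq H\big({\bf X}_{s_j} \mid {\bf X}_1, \ldots , {\bf X}_{s_j - 1}\big) = h_{s_j}.$$
Summing over $j$ yields $H({\bf X}_S) \geq \sum_{i \in S} h_i$.

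Finally I would sum this bound over all $S \in {\cal S}$ and exchange the order of summation:
$$\sum_{S \in {\cal S}} H({\bf X}_S) \geq \sum_{S \in {\cal S}} \sum_{i \in S} h_i = \sum_{i \in [n]} \big|\{S \in {\cal S}: i \in S\}\big| \cdot h_i.$$
Since ${\cal S}$ is a $D$-cover, the multiplicity $|\{S \in {\cal S}: i \in S\}|$ is at least $D$ for every $i \in [n]$; as each $h_i \geq 0$, we may replace every multiplicity by $D$ to obtain $\sum_{S \in {\cal S}} H({\bf X}_S) \geq D \sum_{i \in [n]} h_i = D \cdot H({\bf X})$, which is the claimed inequality.

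The argument is short and there is no serious obstacle; the one point needing care is the direction of the conditioning step — one must condition ${\bf X}_{s_j}$ on \emph{all} earlier coordinates in $[n]$, not merely those lying in $S$ — and it is precisely there, when passing from the exact multiplicities to the uniform bound $D$, that non-negativity of the $h_i$ (equivalently, discreteness of ${\bf X}$) is used.
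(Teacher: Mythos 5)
Your proof is correct: the chain rule, the fact that conditioning on more variables does not increase entropy, and non-negativity of the conditional entropies (valid since $\Omega$ is finite) combine exactly as you describe, and the direction of the conditioning step is handled properly. The paper itself states Shearer's inequality without proof, citing \cite{cfgs} and \cite{aands}, and your argument is essentially the standard proof given in those references, so there is nothing further to reconcile.
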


With this notation in place, we can state the partitioning lemma.

\begin{lem}
	\label{lemma: sectional control}
	Let $\A \subset \{0,1\}^n$ with $|\partial _e(\A )| \leq 
	|\A |\big ( \log _2 (2^n/ |\A |) + K \big )$.
        Suppose $I_1\cup \cdots \cup I_M = [n]$ is a partition. Then
	\begin{enumerate}[label = (\roman*)]
		\item $\sum _{m\in [M]} H(\alpha ^{I_m}) - (M-1)H(\alpha ^\es) \leq K$; 
		\item $\sum _{m\in [M]} K^{I_m} \leq K.$
	\end{enumerate} 
\end{lem}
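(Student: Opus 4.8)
The plan is to translate the edge-boundary hypothesis into a single entropy inequality, via a formula expressing the $I$-edge boundary of $\A$ in terms of the entropy $H(\alpha^I)$ and the averaged excess $K^I$, and then to read off (i) and (ii) from that inequality using Shearer's inequality and the edge-isoperimetric inequality respectively. \textbf{Step 1 (a sectional formula).} For any $I\subseteq[n]$ with complement $J$, I would start from $|\pe^I(\A)|=\sum_{y\in\{0,1\}^J}|\pe(\A^I_y)|$ together with the defining relation $|\pe(\A^I_y)|=|\A^I_y|\big(\log_2(2^{|I|}/|\A^I_y|)+K^I_y\big)$, substitute $|\A^I_y|=\alpha^I_y|\A|$, and collect terms using $\sum_y\alpha^I_y=1$, $-\sum_y\alpha^I_y\log_2\alpha^I_y=H(\alpha^I)$ and $\sum_y\alpha^I_y K^I_y=K^I$. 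This gives
\[
|\pe^I(\A)| \;=\; |\A|\big(|I| - \log_2|\A| + H(\alpha^I) + K^I\big),
\]
with empty sections contributing $0$ to both sides and so causing no difficulty.

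\textbf{Step 2 (summing over the partition).} Every edge of $Q_n$ has a unique direction, and $I_1\cup\cdots\cup I_M=[n]$, so the sets $\pe^{I_m}(\A)$ partition $\pe(\A)$. Summing the Step~1 formula over $m$ and using $\sum_m|I_m|=n$ yields
\[
|\pe(\A)| \;=\; |\A|\Big(n - M\log_2|\A| + \sum_{m\in[M]}H(\alpha^{I_m}) + \sum_{m\in[M]}K^{I_m}\Big).
\]
Comparing this with the hypothesis $|\pe(\A)|\le|\A|\big(n-\log_2|\A|+K\big)$, and using that $\alpha^\es$ is uniform on $\A$ so that $H(\alpha^\es)=\log_2|\A|$, I obtain the key inequality
\[
\Big(\sum_{m\in[M]}H(\alpha^{I_m}) - (M-1)H(\alpha^\es)\Big) \;+\; \sum_{m\in[M]}K^{I_m} \;\le\; K.
\]

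\textbf{Step 3 (two nonnegativity estimates).} Each of the two bracketed quantities in the key inequality is nonnegative. For the first: $\alpha^{I_m}$ is the law of ${\bf X}_{[n]\setminus I_m}$, and a coordinate $j$ lies in $[n]\setminus I_m$ for exactly the $M-1$ values of $m$ with $j\notin I_m$, so $\{[n]\setminus I_m\}_{m\in[M]}$ is an $(M-1)$-cover of $[n]$; Shearer's inequality (Theorem~\ref{thm: Shearer}) then gives $\sum_m H(\alpha^{I_m})\ge(M-1)H(\alpha^\es)$. For the second: applying the edge-isoperimetric inequality (Theorem~\ref{thm: edge iso}) to each section $\A^{I_m}_y\subseteq\{0,1\}^{I_m}$ shows $K^{I_m}_y\ge0$, whence $K^{I_m}=\sum_y\alpha^{I_m}_y K^{I_m}_y\ge0$.

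\textbf{Step 4 (conclusion).} Discarding the nonnegative second summand in the key inequality gives (i), and discarding the nonnegative first summand gives (ii). The argument is essentially bookkeeping once the sectional formula of Step~1 is in place; the only point needing a moment's thought is the observation in Step~3 that it is the \emph{complements} $[n]\setminus I_m$, rather than the $I_m$ themselves, that form an $(M-1)$-cover, which is precisely what makes Shearer's inequality apply in the direction required.
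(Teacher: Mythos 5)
Your proposal is correct and follows essentially the same route as the paper's own proof: expand each $|\pe^{I_m}(\A)|$ via the sectional formula, sum over the partition to obtain the key combined inequality, then use $K^{I_m}\ge 0$ (from Theorem~\ref{thm: edge iso}) to get (i) and Shearer's inequality applied to the $(M-1)$-cover $\{[n]\setminus I_m\}_{m\in[M]}$ to get (ii). No gaps.
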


\begin{proof}
As $\A ^{I}_{y} \subset \{0,1\}^{I}$, by Theorem \ref{thm: edge iso} we have $|\partial ^{I}_e (\A ^{I}_{y})| = |\A ^{I}_{y}| \big (\log \big (2^{|I|}/|\A ^{I}_{y}|\big ) + K_{y}^{I} \big )$ with $K^I_y \geq 0$.  Expanding this expression, we find
\begin{equation*}
	|\partial ^{I}_e (\A ^{I}_{y})|
			= 
	\alpha ^I_{y} |\A |  \log _2 \big (2^{|I|}/|\A |\big )  
	- \alpha ^I_{y}|\A | \log _2 (\alpha ^I_{y}) + \alpha ^I_{y}  K_{y}^I|\A |.
\end{equation*}
Summing over ${y}\in \{0,1\}^{J}$, as $\sum _{{y}\in \{0,1\}^{J}} 
\alpha ^I_{y} = 1$ we obtain 
\begin{equation*}
	|\partial ^{I}_e (\A )| 
	 =  \sum _{{y}\in \{0,1\}^{J}} |\partial ^{I}_e (\A ^I_{y})|
	 =  |\A| \log _2 \big (2^{|I|}/|\A | \big ) + |\A | \big (H(\mathbf{\alpha }^I) 
	 + K^I \big ).
\end{equation*}
Apply this equality for $I_1,\ldots ,I_M$. Using $|\partial _e (\A )| = \sum _{m\in [M]}|\partial ^{I_m}_e (\A )| $, we obtain 
\begin{equation*}
	|\partial _e (\A )| 
	 =  |\A | \log _2\big (2^{n}/|\A | \big ) + |\A | \Big (\sum _{m\in [M]}H(\mathbf{\alpha }^{I_m}) - (M-1) \log _2 |\A |
	 + \sum _{m\in [M]} K^{I_m} \Big ).
\end{equation*}
Since $\log _2 |\A | = H(\alpha ^\es)$ and $|\partial _e(\A )| \leq |\A | \big (\log _2 (2^n/|\A |) + K \big )$ this gives 
\begin{equation}
	\label{expression for the I-edge boundary contributions}
	\sum _{m\in [M]}H(\mathbf{\alpha }^{I_m}) - (M-1) H(\alpha ^\es)
	 + \sum _{m\in [M]} K^{I_m} 
	 	\leq 
	 K.
\end{equation}

Both $(i)$ and $(ii)$ now follow from \eqref{expression for the I-edge boundary contributions}. Indeed, $(i)$ holds since $K^{I_m} \geq 0$ for all $m\in [M]$. To see $(ii)$, by \eqref{expression for the I-edge boundary contributions} it suffices to show $\sum _{m \in [M]} H(\alpha ^{I_m})\geq (M-1) H({\alpha }^\es)$. To see this, consider selecting $x \in \A $ uniformly at random, and for all $i\in [n]$ let ${\bf X}_i$ denote the random variable given by ${\bf X}_i(x) = x_i$. For all $I' \subset [n]$ also let ${\bf X}_{I'} = ({\bf X}_i)_{i\in I'}$. Then ${\bf X}_{J_m}$ satisfies 
${\mathbb P}({\bf X}_{J_m} = y_m) = \alpha ^{I_m}_{y_m}$ for all $m\in [M]$, giving $H({\bf X}_{J_m}) = H(\alpha ^{I_m})$. Furthermore 
$H({\bf X}_{[n]}) = \log _2 |\A |$. However $\{J_m\}_{m\in [M]}$ forms a $(M-1)$-cover for $[n]$. Theorem \ref{thm: Shearer} therefore gives $\sum _{m\in [M]} H(\alpha ^{I_m}) = \sum _{m\in [M]} H({\bf X}_m) \geq (M-1) H({\bf X}_{[n]}) = (M-1) H(\alpha^\es)$. This completes the proof of the lemma.\end{proof}

%
%
%
%
%
%
%

\section{Approximate product structure}

In this section we will use Lemma \ref{lemma: sectional control} $(i)$ 
with $M = 2$ to show that if $\A $ has small isoperimetric excess 
then it has an approximate product structure
with respect to any partition $[n] = I \cup J$,
in the sense that for most elements $x \in A$
the product of `orthogonal sections' $|\A ^J_{{x}_I}||\A ^I_{{x}_J}|$
is comparable with $|\A|$.

Recall that for $y \in \{0,1\}^J$ we let $\alpha ^I_{y} = |\A ^I_y| / |\A |$, 
for $z \in \{0,1\}^I$ we let $\alpha ^J_{z} = |\A ^J_z| / |\A |$,
and $\alpha ^\es_x = 1/|\A |$ for all $x\in \A$ and $0$ otherwise. 
We also let $\alpha ^I, \alpha ^J$ and $\alpha ^\es$ 
denote the corresponding probability distributions.
The quantity $H(\alpha ^I) + H(\alpha ^J) - H(\alpha ^\es) $
can be viewed as the mutual information of the random variables
${\bf X}_I$ and ${\bf X}_J$ considered in the previous section.
If the mutual information were zero, then the variables would 
be independent, and $\A$ would have a product structure.
The following lemma can be viewed as a stability version 
of this observation.

\begin{lem} 
	\label{control of the conditional entropy implies product-like sizes}
	Let $K , \varepsilon >0$ and suppose $H(\alpha ^I) + H(\alpha ^J) - 
	H(\alpha ^\es) \leq K$. Then for at least $(1-\varepsilon )|\A |$ elements ${x} \in \A $ we have $|\A ^J_{{x}_I}||\A ^I_{{x}_J}| \geq  {|\A |}/{e.2^{K/\varepsilon}}$.
\end{lem}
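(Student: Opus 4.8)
The plan is to interpret the hypothesis information-theoretically and then apply Markov's inequality. Consider selecting $x \in \A$ uniformly at random and let ${\bf X}_I = (x_i)_{i \in I}$ and ${\bf X}_J = (x_i)_{i \in J}$ as in the previous section, so that $H({\bf X}_I) = H(\alpha^J)$, $H({\bf X}_J) = H(\alpha^I)$ and $H({\bf X}_I, {\bf X}_J) = H({\bf X}_{[n]}) = \log_2 |\A| = H(\alpha^\es)$. The hypothesis is precisely that the mutual information satisfies $I({\bf X}_I ; {\bf X}_J) = H({\bf X}_I) + H({\bf X}_J) - H({\bf X}_I, {\bf X}_J) \le K$.

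The next step is to rewrite the target quantity $|\A^J_{x_I}| |\A^I_{x_J}|$ in probabilistic terms. For a fixed $x \in \A$, writing $y = x_J$ and $z = x_I$, we have $|\A^I_y| = \alpha^I_y |\A| = {\mathbb P}({\bf X}_J = y) |\A|$ and $|\A^J_z| = \alpha^J_z |\A| = {\mathbb P}({\bf X}_I = z) |\A|$, while ${\mathbb P}({\bf X}_I = z, {\bf X}_J = y) = 1/|\A|$ since $x$ is uniform on $\A$. Hence
\begin{equation*}
	\frac{|\A^J_{x_I}| |\A^I_{x_J}|}{|\A|} = \frac{{\mathbb P}({\bf X}_I = x_I) {\mathbb P}({\bf X}_J = x_J)}{{\mathbb P}({\bf X}_I = x_I, {\bf X}_J = x_J)} = 2^{-\iota(x)},
\end{equation*}
where $\iota(x) = \log_2 \frac{{\mathbb P}({\bf X}_I = x_I, {\bf X}_J = x_J)}{{\mathbb P}({\bf X}_I = x_I){\mathbb P}({\bf X}_J = x_J)}$ is the pointwise (information density) version of mutual information. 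By definition, the expectation of $\iota(x)$ over the uniform random $x \in \A$ is exactly $I({\bf X}_I ; {\bf X}_J) \le K$. The desired conclusion, $|\A^J_{x_I}||\A^I_{x_J}| \ge |\A| / (e \cdot 2^{K/\varepsilon})$, is equivalent to $\iota(x) \le K/\varepsilon + \log_2 e$.

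The final step is a Markov-type bound, but one must be slightly careful because $\iota(x)$ can be negative, so Markov does not apply directly. The fix is to bound the probability that $\iota(x)$ is large by controlling the contribution of those $x$ to the expectation; concretely one uses the standard fact that for a random variable $Z$ with ${\mathbb E}(Z) \le K$ one has ${\mathbb P}(Z \ge t) \le K/t$ provided $Z$ satisfies $Z \ge -c$ for some constant and $t$ is taken a bit larger — here the relevant statement is that $\sum_{x : \iota(x) > 0} \frac{1}{|\A|}\iota(x) \le {\mathbb E}(\iota) + \log_2 e \le K + \log_2 e$, using that $\sum_{x : \iota(x) \le 0} \frac{1}{|\A|}|\iota(x)| \le \log_2 e$ (this last bound follows since $-\iota(x) \le \log_2 e \cdot (1 - 2^{\iota(x)})$ pointwise and $\sum_x \frac{1}{|\A|} 2^{\iota(x)} = \sum_x \frac{{\mathbb P}({\bf X}_I = x_I){\mathbb P}({\bf X}_J = x_J)}{1} \le 1$). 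Then Markov's inequality applied to the nonnegative part of $\iota$ gives ${\mathbb P}(\iota(x) > K/\varepsilon) \le (K + \log_2 e)/(K/\varepsilon) \le \varepsilon(1 + \log_2 e / K)$; absorbing the additive $\log_2 e$ into the threshold (asking for $\iota(x) \le K/\varepsilon + \log_2 e$ instead) tightens this to at most $\varepsilon$, so at least $(1-\varepsilon)|\A|$ elements $x$ satisfy $\iota(x) \le K/\varepsilon + \log_2 e$, which is the claim. The one genuinely delicate point is handling the negative part of $\iota$ correctly so that the Markov argument yields the clean bound with the stated $e \cdot 2^{K/\varepsilon}$ constant; everything else is bookkeeping translating between set sizes, probabilities and entropies.
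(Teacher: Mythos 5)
Your high-level plan — interpret the hypothesis as a mutual-information bound, rewrite $|\A^J_{x_I}||\A^I_{x_J}|/|\A| = 2^{-\iota(x)}$ in terms of the information density $\iota$, and then argue by a first-moment (Markov) bound on $\iota$ — is the same underlying idea as the paper, which works with $b_x = 2^{\iota(x)}$. You also correctly identify the genuinely delicate point: $\iota$ can be negative, so Markov does not apply directly. But your proposed fix has both local errors and a structural problem that prevents it from proving the lemma with the stated constant.

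First, the pointwise inequality $-\iota(x) \le \log_2 e \cdot (1 - 2^{\iota(x)})$ is in the wrong direction: since $2^u = e^{u\ln 2} \ge 1 + u\ln 2$, one has $1 - 2^u \le -u\ln 2$, i.e.\ $-\iota(x) \ge \log_2 e\cdot(1 - 2^{\iota(x)})$. Relatedly, there is a sign error in the companion identity: $\tfrac{1}{|\A|}2^{\iota(x)}$ is not ${\mathbb P}({\bf X}_I=x_I){\mathbb P}({\bf X}_J=x_J)$, but $\tfrac{1}{|\A|}2^{-\iota(x)}$ is. A corrected version of your negative-mass estimate does hold (for $v = -\iota(x)\ge 0$ use $v \le \log_2 e\,(2^{v}-1)$ and $\sum_x \tfrac{1}{|\A|}2^{-\iota(x)} \le 1$, giving $\sum_{\iota\le 0}\tfrac{1}{|\A|}|\iota| \le \log_2 e$), so ${\mathbb E}(\iota^+)\le K+\log_2 e$.

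Second, and more seriously, the final Markov step does not yield the claimed threshold. Markov on $\iota^+$ with threshold $t$ gives ${\mathbb P}(\iota > t)\le (K+\log_2 e)/t$. With $t = K/\varepsilon + \log_2 e$ one needs $(K+\log_2 e)/(K/\varepsilon + \log_2 e)\le \varepsilon$, which rearranges to $\log_2 e \le \varepsilon\log_2 e$, i.e.\ $\varepsilon\ge 1$ — false in the nontrivial regime. The best you get from your decomposition is ${\mathbb P}\big(\iota > (K+\log_2 e)/\varepsilon\big)\le\varepsilon$, i.e.\ $|\A^J_{x_I}||\A^I_{x_J}|\ge |\A|/(e^{1/\varepsilon}2^{K/\varepsilon})$ for at least $(1-\varepsilon)|\A|$ elements, which is weaker than the lemma by the factor $e^{1/\varepsilon - 1}$.

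The paper avoids this loss by not splitting $\iota$ into positive and negative parts: it applies Markov to the manifestly nonnegative quantity $W(x) := \iota(x) + \log_2 e\,(2^{-\iota(x)} - 1) \ge 0$, which in its notation appears as $\alpha^I_{x_J}\alpha^J_{x_I} f(b_x)$ with $f(t) = t\log_e t + 1 - t \ge 0$. One has ${\mathbb E}(W) = {\mathbb E}(\iota) + \log_2 e\big({\mathbb E}(2^{-\iota}) - 1\big) \le K$ (both correction terms are nonpositive), and $W$ is increasing in $\iota$ for $\iota\ge 0$, so Markov gives ${\mathbb P}(\iota\ge t)\le K/W(t)\le K/(t-\log_2 e)$ for $t\ge \log_2 e$; setting $t = K/\varepsilon + \log_2 e$ gives exactly $\varepsilon$. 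This is the same computation as the paper's use of $g(t) = f(t)/t$ being increasing and $D/f(D)\le 1/(\log_e D - 1)$. So: the spirit of your argument is right and the obstacle you flag is the right one, but the patch as written is incorrect, and even when repaired in the most natural way it loses a factor and falls short of the stated bound. The key missing move is to Markov on $W$ rather than on $\iota^+$.
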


\begin{proof}
Write $b_{x} = {\alpha ^\es_{x}} / ({\alpha ^I_{{x}_{J}} \alpha ^J_{{x}_{I}}})$
and let $f(t):= t \log _e t + 1 - t$. We claim that
\begin{align*}
	\log _e 2 \times  \big (H(\alpha ^I) + H({\alpha }^J) - 
	 H({\alpha }^\es) \big ) 
	 	& = 
	 \sum _{{x} \in \{0,1\}^{[n]}} 
	 \alpha ^I_{{x}_{J}} \alpha ^J_{{x}_{I}} \big (b_{x} \log _e b_{x} \big ) 
	  =  \sum _{{x} \in \{0,1\}^{[n]}} 
	 \alpha ^I_{{x}_{J}} \alpha ^J_{{x}_{I}} f(b_{x}).
\end{align*}
To see this, first note that
\begin{equation*}
	H(\alpha ^J) = 
	- \sum _{{y} \in \{0,1\}^J} \alpha ^I_{y} 
	\log _2 \big ( \alpha ^I_{y} \big ) = - \sum _{{x} \in \{0,1\}^\es } 
	\alpha ^\es_{x} \log _2 \big ( \alpha ^I_{{x}_J} \big ).
\end{equation*}
Using the analogous expressions for $H(\alpha ^I)$ and $H({\alpha }^\es)$, 
we obtain
\begin{eqnarray*}
	H(\alpha ^I) + H({\alpha }^J) - 
	 H({\alpha }^\es)
	  =  \sum _{{x}\in \{0,1\}^{[n]}} \alpha ^\es_{x} 
	 \log _2 (b_{ x}) 
	  =  \sum _{{x} \in \{0,1\}^{[n]} } \alpha ^I_{{x}_{J}} \alpha ^J_{{x}_{I}} 
	  b_{{x}}  \log _2 b_{x}.
\end{eqnarray*}	
This gives the first equality of the claim. The second follows as
\[\sum _{{x}\in \{0,1\}^{[n]} }\alpha ^I_{{x}_{J}}\alpha ^J_{{x}_{I}} 
= \big ( \sum _{{z}\in \{0,1\}^{I}} \alpha ^J_{z} \big ) 
\big ( \sum _{{y} \in \{0,1\}^{J}} \alpha ^I_{y}  \big ) = 1, \text{ and } \]
\[\sum _{{x} \in \{0,1\}^{[n]}} \big ( \alpha ^I_{{x}_{J}}\alpha ^J_{{x}_{I}} \big ) b_{x} = 
\sum _{{x} \in \{0,1\}^{[n]}} \alpha ^\es_{x} = 1.\]

Now consider $\A _D := \{{x} \in \A : b_{x} \geq D\} \subset \A $ for $D>1$.
We have
\begin{align*}
	\frac {|\A_D|}{|\A |} 
		& =  
	\sum _{{x} \in \A _D} \alpha ^\es_{x}
		=  
	\sum _{{x}\in \A : b_{x} \geq D} 
			\alpha ^I_{{x}_J} \alpha ^J_{{x}_I} b_{x} 
	 	= 
	 \sum _{{x}\in \A : b_{x} \geq D} 
	 		\alpha ^I_{{x}} \alpha ^J_{{x}_I} f(b _{x})
	 		\Big ( \frac {b_{x}} {f (b_{x})} \Big )\\	 
	 & \leq  
	 \Big ( \frac {D} {f (D)} \Big ) \times \sum _{{x}\in \A } 
	 		\alpha ^I_{{x}} \alpha ^J_{{x}_I} f(b _{x})
	 = \Big ( \frac {D} {f (D)} \Big ) \times 
	     \log _e 2 \times \big (H(\alpha ^I) + H(\alpha ^J) - 
	 H(\alpha ^\es) \big )\\
	& \leq \frac {(\log _e 2)K}{\log _e D - 1}.
\end{align*}
The first inequality holds as $f(t) \ge 0$ for $t>0$
and $g(t) := f(t)/t$ satisfies $g'(t) = t^{-1} - t^{-2} \ge 0$ for $t \ge 1$.
The following equality holds by the claim, and then the
final inequality holds since $D/f(D) \leq 1/(\log _e D -1)$ 
and $H({\alpha }^I) + H({\alpha }^J) - 	H({\alpha}^\es) \leq K$
by Lemma \ref{lemma: sectional control}(i). 
Setting $D = e2^{K/\varepsilon}$ gives $|\A _D| \leq \varepsilon |\A |$. Since $|\A |/(|\A _{{x}_I}^J||\A _{{x}_J}^I|) = b_{x} \leq D$ for all ${x} \notin \A _D$, this completes the proof.
\end{proof}

\section{Sparse sections}

In this section we prove the following result,
which shows that if $\A \subset V(Q_n)$ is sparse, 
then this is also true of typical sections of $\A$.
Another way to interpret the result 
(which is also convenient for the proof)
is to consider a random element $x$ of $\A$,
reveal all but $d$ of its coordinates,
then sample a new element of $\widetilde {x} \in \{0,1\}^{n}$
that agrees with the revealed coordinates.
Then there is typically still some uncertainty
as to whether $\widetilde {x}$ is in $\A$
(It Ain't Over Till It's Over).

\begin{lem}
\label{control of small cube sizes}
Let $\A \subset V(Q_n)$ with $|\A| = \alpha 2^n$ 
and $d \in \mathbb{N}$ with $d \le 0.15n$. Independently select:
	\begin{itemize}
		\item $x \in {\cal A}$ uniformly at random, 
		\item $I \subset [n]$ with $|I| = d $, uniformly at random.
	\end{itemize}
Then ${\mathbb E}_{x,I} (|\A ^I_{x_J}|) \leq 2\alpha ^{d/8n}2^d$, 
where $J=[n] \setminus I$. 
\end{lem}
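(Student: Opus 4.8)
The plan is to express the quantity $\mathbb{E}_{x,I}(|\A^I_{x_J}|)$ in terms of the spherical-average operator $S_\ell$ and then apply Polyanskiy's inequality (Theorem \ref{thm: Polyansky}). First I would rewrite the expectation: for fixed $I$ and $x$, the section size $|\A^I_{x_J}|$ counts the number of $z \in \{0,1\}^I$ with $x_J \circ z \in \A$; these are exactly the elements $\widetilde x \in \{0,1\}^n$ agreeing with $x$ outside $I$ and lying in $\A$. Summing $1_\A(\widetilde x)$ over such $\widetilde x$ and then averaging over $x \in \A$ (uniformly, so with weight $1/|\A| = 1/(\alpha 2^n)$) and over $I$, I expect to obtain an expression of the shape
\[
\mathbb{E}_{x,I}(|\A^I_{x_J}|) = \frac{1}{\alpha}\, \mathbb{E}_{\ell}\big[\langle 1_\A, S_\ell 1_\A\rangle\big] \cdot (\text{combinatorial factor}),
\]
where $\ell$ ranges over the Hamming distance between $x$ and a uniformly chosen agreeing-outside-$I$ element (so $\ell$ is roughly $\mathrm{Bin}(d,1/2)$-distributed once $|I| = d$ is fixed), and the combinatorial factor accounts for $2^d$ choices of $z$. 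The key point is that $\langle 1_\A, S_\ell 1_\A \rangle$ measures the $\A$-to-$\A$ correlation at Hamming distance $\ell$, which is exactly what $S_\ell$ controls.

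Next I would apply Theorem \ref{thm: Polyansky}: since $d \le 0.15n$ we have $\ell \le d \le 0.15n$, so $\|S_\ell 1_\A\|_2 \le 2^{1/2}\|1_\A\|_{1+(1-2\ell/n)^2}$. Because $1_\A$ is Boolean, $\|1_\A\|_p = \alpha^{1/p}$ for every $p$, so $\|1_\A\|_{1+(1-2\ell/n)^2} = \alpha^{1/(1+(1-2\ell/n)^2)}$. Combining with Cauchy--Schwarz, $\langle 1_\A, S_\ell 1_\A\rangle \le \|1_\A\|_2 \|S_\ell 1_\A\|_2 \le 2^{1/2}\alpha^{1/2}\alpha^{1/(1+(1-2\ell/n)^2)}$. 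Dividing by $\alpha$ and reinstating the $2^d$ factor, the bound on $\mathbb{E}_{x,I}(|\A^I_{x_J}|)$ becomes $2^{1/2} \cdot 2^d \cdot \mathbb{E}_\ell\big[\alpha^{1/(1+(1-2\ell/n)^2) - 1/2}\big]$, and the task reduces to showing this expectation of powers of $\alpha$ is at most $2^{1/2}\alpha^{d/8n}$, i.e.\ that the exponent $\tfrac{1}{1+(1-2\ell/n)^2} - \tfrac12$ is, on average over $\ell$, at least about $d/8n$ (with $\alpha \le 1$, larger exponents give smaller values, so I want a lower bound on the exponent; the factor-$2$ slack absorbs the convexity loss from Jensen going the wrong way, which is why one keeps $\mathbb{E}_\ell$ rather than substituting the mean $\ell$).

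The main obstacle I anticipate is the final estimate: controlling $\mathbb{E}_\ell\big[\alpha^{h(\ell)}\big]$ where $h(\ell) = \tfrac{1}{1+(1-2\ell/n)^2}-\tfrac12$ and $\ell \sim \mathrm{Bin}(d,1/2)$ (or a closely related distribution). Since $\alpha \le 1$ the function $t \mapsto \alpha^t$ is convex and decreasing, so I cannot simply plug in $\mathbb{E}(\ell) = d/2$; instead I would either (a) bound $h(\ell) \ge c\,\ell/n$ for $\ell$ in the bulk of the binomial and handle the tail $\ell$ close to $0$ separately using that $\Pr(\ell \text{ small})$ is exponentially small, or (b) use $h(\ell) \ge 0$ always together with a concentration estimate showing $\ell \ge d/4$ with probability close to $1$, on which event $h(\ell) \gtrsim (d/4)/n \cdot (\text{const})$. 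A short computation should give $h(\ell) = \tfrac{(1-2\ell/n)^2}{2(1+(1-2\ell/n)^2)} \ge$ roughly $\ell/(2n)$ for small $\ell/n$... actually I need the opposite monotonicity, so more carefully: $h$ is increasing in $\ell$ on $[0, n/2]$, $h(0)=0$, and near $\ell = d$ one has $1-2\ell/n$ close to $1$ making $h$ of order a constant — wait, that overshoots. The correct reading: for $\ell$ near $d/2$ one gets $1 - 2\ell/n \approx 1 - d/n$, so $h(\ell) \approx \tfrac{(1-d/n)^2}{2(1+(1-d/n)^2)}$, which for $d$ small compared to $n$ is close to $1/4$, far larger than $d/8n$ — so the inequality has plenty of room and the factor of $2$ comfortably covers any Jensen/tail losses. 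So concretely I would: show $h(\ell) \ge \tfrac{1}{8}$ whenever $\ell \le n/3$ say (true since then $1-2\ell/n \ge 1/3$), note $\ell \le d \le 0.15 n < n/3$ always, hence $\alpha^{h(\ell)} \le \alpha^{1/8} \le \alpha^{d/8n}$ (as $d/n \le 1$ and $\alpha \le 1$), and conclude $\mathbb{E}_{x,I}(|\A^I_{x_J}|) \le 2^{1/2} \cdot 2^{1/2} \cdot 2^d \alpha^{d/8n} = 2\alpha^{d/8n}2^d$ — exactly the claimed bound, with no Jensen issue at all. The only genuinely careful part is then the bookkeeping in Step 1 that identifies the expectation with $\tfrac{1}{\alpha}\langle 1_\A, S_{\ell} 1_\A\rangle$-type quantities and tracks the role of the randomness in $\ell$.
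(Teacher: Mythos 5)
Your setup (rewriting $\mathbb{E}_{x,I}(|\A^I_{x_J}|)$ as $2^d\,\mathbb{P}(\widetilde x\in\A)$ for a point $\widetilde x$ that resamples the $I$-coordinates, expressing this as $\alpha^{-1}\langle 1_\A,S1_\A\rangle$ with $S=2^{-d}\sum_{\ell=0}^d\binom{d}{\ell}S_\ell$, then applying Cauchy--Schwarz and Polyanskiy) is exactly what the paper does, and is correct. The problem is in the final step. Your formula for the exponent is wrong: with $s=(1-2\ell/n)^2$,
\[
h(\ell)\;=\;\frac{1}{1+s}-\frac{1}{2}\;=\;\frac{1-s}{2(1+s)}\;=\;\frac{2\ell/n-2\ell^2/n^2}{1+(1-2\ell/n)^2},
\]
not $\frac{s}{2(1+s)}$. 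In particular $h(0)=0$ (which you yourself observed a few lines earlier), so the proposed uniform bound ``$h(\ell)\ge 1/8$ whenever $\ell\le n/3$'' is false, and the purported chain $\alpha^{h(\ell)}\le\alpha^{1/8}\le\alpha^{d/8n}$ fails already at $\ell=0$, where $\alpha^{h(0)}=1$. Note also that your justification of the $1/8$ bound (``true since then $1-2\ell/n\ge 1/3$'') runs the inequality the wrong way: since $h$ is \emph{increasing} in $\ell$, an upper bound on $\ell$ only produces an upper bound on $h$, never a lower one.

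The paper's resolution is to use the pointwise \emph{linear} estimate $h(\ell)\ge\ell/2n$ (valid for $\ell\le n/2$, and holding with equality-up-to-constants rather than being slack), which correctly vanishes at $\ell=0$. This gives $\alpha^{h(\ell)}\le\alpha^{\ell/2n}$, and then the binomial theorem evaluates the $\mathrm{Bin}(d,1/2)$ average in closed form:
\[
\frac{1}{2^d}\sum_{\ell=0}^d\binom{d}{\ell}\alpha^{\ell/2n}=\Big(\frac{1+\alpha^{1/2n}}{2}\Big)^d,
\]
after which writing $\alpha=e^{-L}$ and using $e^{\gamma}\le 1+\gamma/2$ on $[-1,0]$ together with $1+\gamma\le e^{\gamma}$ gives $\big(\tfrac{1+\alpha^{1/2n}}{2}\big)^d\le\alpha^{d/8n}$. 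This closed-form averaging is the key move you're missing: it implements the correct geometric-mean compromise between the $\ell=0$ endpoint (contributing $1$) and the bulk $\ell\approx d/2$ (contributing roughly $\alpha^{d/4n}$), landing on exactly $\alpha^{d/8n}$ with none of the constant-factor losses that your fallback ideas (a) and (b) --- which discard the small-$\ell$ tail --- would incur.
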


Note that the exponent of $\aA$ is tight up to a constant factor
(for example, when $\A$ is a subcube).

\begin{proof} 
Given $x$ and $I$, we also select 
$\widetilde {x} \in \{0,1\}^{n}$ uniformly at random 
subject to ${\widetilde {x}}_{J} = {x}_J$.
Note that
\begin{align*}
	{\mathbb E}_{x,I} 
			\big (|\A ^I_{{x}_{J}}| \big ) 
	= 
	{\mathbb E}_{x,I} 
			\big ( {{\mathbb P}(\widetilde {x} \in \A |x,I)} \cdot 2^{d} \big ) 
	& = 
	 { {\mathbb P}(\widetilde {x} \in \A ) }
		  \cdot  2^d
\end{align*}
The lemma is thus equivalent to showing that 
${\mathbb P}(\widetilde {x}\in {\A}) \leq 2 \alpha ^{d/8n}$. 

To see this, we note that given 
	$w \in \A$ and $\widetilde w \in \{0,1\}^n$ with 
	$d_H({w},\widetilde {w}) = \ell$, we have
\begin{align*}
		{\mathbb P}\big (\widetilde {x} = \widetilde {w}| {x} = w \big ) 
			& = 
		{\mathbb P} \big (d_H({x},\widetilde {x}) = \ell |{x} = w\big )\cdot 
		{\mathbb P}\big ( \widetilde {x} = \widetilde {w}|
		{x}=w, d_H(\widetilde {x},{x}) = \ell \big ) \\
			& =
		\left\{ \begin{array}{ll}
         2^{-d}\binom {d}{\ell}. 
		\binom {n}{\ell }^{-1} & \mbox{if $\ell \leq d$};\\
        0 & \mbox{otherwise.}\end{array} \right. 
\end{align*}
	Let $S$ denote the linear operator $S = 2^{-d} \sum _{\ell = 0}^d 
	\binom {d}{\ell } S_{\ell }$, with $S_{\ell }$ 
as in \eqref{equation: hypercontractive operator}.
Let ${1}_{\A }$ denote the indicator function of $\A $.
Then for $w \in \A$ we have 
	\begin{equation*}
		S\big ( 1_{\A} \big ) ({w}) 
				= 
		\bigg ( \frac {1}{2^{d}} 
		\sum _{\ell =0}^{d} \binom {d}{\ell} S_{\ell} \bigg )
		 1_{\A} ({w} ) 
				=
		{\mathbb P}\big (\widetilde {x} \in \A| x = w\big ).
	\end{equation*}

We deduce that ${\mathbb P}(\widetilde {x}\in \A)
= \sum_{w \in \{0,1\}^n} {\mathbb P}(x=w)
{\mathbb P}\big (\widetilde {x} = \widetilde {w}| {x} = w \big )
= \langle \alpha ^{-1} 1_{\A},S 1_{\A} \rangle$.

Separating $S$ and 
	using the Cauchy-Schwarz inequality gives
	\begin{equation}
		\label{equation: cauchy-schwarz and hypercontractive}
		{\mathbb P}({\widetilde {x}}\in \A) 
		\leq 
		\alpha ^{-1} \sum _{\ell =0}^{d} 
		{\binom {d}{\ell }}{2^{-d}}\|1_{\A}\|_2 \|S_{\ell} 1_{\A}\|_2.
	\end{equation}
	However, by Theorem \ref{thm: Polyansky} we have 
	$\| S_{\ell }(1_{\cal A})\|_2 \leq 
	2\|{\bf 1}_{\cal A}\|_{1 + (1-2\ell / n)^2}$ for $\ell \leq d$.
As \[(1 + (1-2\ell /n)^2)^{-1} 	- {1}/{2} 
	= ({2 \ell /n - 2 \ell ^2 /n^2})({1 + (1 -2\ell /n)^2})^{-1} \geq 
	\ell/n - \ell ^2/n^2 \geq \ell /2n,\] 
since $\ell \leq n/2$, this gives  
	\begin{equation*}
		\|1_{\A}\|_2 \|S_{\ell} 1_{\A}\|_2 
		\leq 2\|1_{\A}\|_2 \|1_{\A}\|_{1 + (1-2\ell /n )^2} = 
		2\alpha ^{\frac {1}{2} + (1 + (1-2\ell /n)^2)^{-1}}
		\leq 2\alpha ^{ 1 + \ell /2n}.
	\end{equation*}
	Combined with \eqref{equation: cauchy-schwarz and hypercontractive} 
	this gives
	\begin{equation*}
		{\mathbb P}(\widetilde {x}\in \A) 
		\leq 
		\frac {2}{{2^{d}}} \sum _{\ell =0}^{d} 
		\binom {d}{\ell } \alpha ^{\ell /2n}
		= 2 \bigg (\frac {1 + \alpha ^{1/2n}}{2} 
		\bigg )^{d}.
	\end{equation*}
	To simplify, let $\alpha = e^{-L}$. As $e^{\gamma } 
	\leq 1 +  \gamma /2$ for $\gamma  \in [-1,0]$ and $1+\gamma  \leq e^{\gamma }$ for all $\gamma \in {\mathbb R}$, we find 
	\begin{eqnarray*}
		\bigg (\frac {1 + \alpha ^{1/2n}}{2} \bigg )^{d} 
			=
		\bigg (\frac {1 + e^{-L/2n}}{2} \bigg )^{d}
			\leq 
		\bigg (\frac {1 + (1 - L/4n)}{2} \bigg )^{d}
			=
		\bigg ( 1 - \frac {L}{8n} \bigg )^{d} 
			\leq 
		e^{-d L/8n} = \alpha ^{d/8n}.
	\end{eqnarray*}
	Therefore $ {\mathbb P}( \widetilde {x}\in \A) \leq 
		2\alpha ^{d/8n}$, completing the proof of the lemma.
\end{proof}

\section{Finding a coordinate of large influence}

In this section we prove that if ${\cal A}\subset \{0,1\}^n$ 
has small isoperimetric excess and is not close to being
the whole cube then there is a coordinate of large influence. 

\begin{thm}
	\label{coordinate of large influence}
		Let ${\A } \subset V(Q_n)$ with $|\A | \leq \big ( \frac{7}{8} \big )2^{n}$ and 
		$|\partial _e (\A ) | \leq |\A | \big ( \log _2 ( 2^n / |\A | ) + K \big )$. 
		Then $\max _{i\in [n]} I _i ({\mathbf 1}_{\A} ) \geq 2^{-C(K+1)^2}{|\A |}/{2^n}$, 
		for some constant $C>0$.
\end{thm}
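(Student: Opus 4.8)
The plan is to find a coordinate of large influence by combining the tools developed in the previous sections: Talagrand's inequality, the approximate product structure, the sparse sections lemma, and Ellis's stability theorem. The starting point is to choose a partition $[n] = I_1 \cup \cdots \cup I_M$ into $M$ blocks, where $M$ is a large constant depending on $K$ to be chosen later. Lemma~\ref{lemma: sectional control}(ii) tells us $\sum_m K^{I_m} \le K$, so at least half the blocks (say) have sectional excess $K^{I_m} \le 2K/M$, hence at least one block $I=I_m$ has \emph{both} small excess \emph{and} — after an averaging argument using Lemma~\ref{lemma: sectional control}(i) with $M=2$ applied to the partition $I \cup ([n]\setminus I)$ — small mutual information between ${\bf X}_I$ and ${\bf X}_J$. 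Actually it is cleaner to first fix one coordinate block $I$ with $K^I$ small and the mutual-information quantity $H(\alpha^I)+H(\alpha^J)-H(\alpha^\es)$ small (both bounded by roughly $K/M$ times a constant, choosing the partition appropriately and using part (i)), and then argue inside $I$.

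The next step is to apply Talagrand's inequality (Theorem~\ref{thm: Talagrand}) to the sections $\A^I_y$. For each $y$, Talagrand gives $\sum_{i \in I} I_i(1_{\A^I_y})/(1 - \log_2 I_i(1_{\A^I_y})) \ge c\,{\mathbb V}ar(1_{\A^I_y})$. If $\A^I_y$ has density $\beta_y = |\A^I_y|/2^{|I|}$ bounded away from $1$, the variance is $\beta_y(1-\beta_y) = \Omega(\beta_y)$, and the small excess $K^I_y$ forces (via the edge-isoperimetric excess being $K^I_y |\A^I_y|$, i.e.\ $\sum_i I_i(1_{\A^I_y}) = |\partial_e(\A^I_y)|/2^{|I|-1}$ is not much more than $\beta_y \log_2 \beta_y^{-1}$) that the influence sum is controlled, so some single influence $I_i(1_{\A^I_y})$ must be at least roughly $2^{-O(K^I_y/\beta_y + 1)}\beta_y$. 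Averaging over $y$ (weighted by $\alpha^I_y$), and using that the expected $\beta_y$ is, by Lemma~\ref{control of small cube sizes}, at least a fixed power of $\alpha = |\A|/2^n$ — which will cancel correctly because we want the final bound in terms of $|\A|/2^n$ — we get a coordinate $i \in I$ with $I_i(1_{\A^I_y})$ large on average over $y$. The product-structure Lemma~\ref{control of the conditional entropy implies product-like sizes} is what lets us pass from "large influence inside typical sections $\A^I_y$" back to "large influence of $1_\A$ itself": for most $x \in \A$ we have $|\A^I_{x_J}| \ge |\A|/(e\,2^{K/\eps} \cdot |\A^J_{x_I}|)$, so the sections are not too small, and an edge witnessing influence of coordinate $i$ inside $\A^I_{x_J}$ lifts to an edge in $\partial_e^{\{i\}}(\A)$; summing over $y$ gives $I_i(1_\A) \ge$ (weighted average of sectional influences) which is the desired bound.

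The main obstacle, and where the $(K+1)^2$ exponent comes from, is the interplay between three competing scales: the excess budget $K/M$ per block, the density $\beta_y$ of sections (which Lemma~\ref{control of small cube sizes} only controls \emph{on average}, as a power $\alpha^{d/8n}$, so I must be careful that $|I|$ is a suitable constant fraction of $n$ — or rather iterate/choose blocks so $|I| \le 0.15n$ while still a definite fraction), and the $e\,2^{K/\eps}$ loss in the product structure lemma when I set $\eps$ to a constant. Balancing $M \sim K$ (to make per-block excess $O(1)$) against the $2^{K/\eps}$ factor and the Talagrand loss $2^{-O(\text{excess}/\text{density})}$ produces a doubly-bad dependence: the excess-over-density ratio is $O(K^I_y/\beta_y)$, and controlling the measure of $y$ with $\beta_y$ too small costs another factor, so the worst case multiplies two factors each exponential in $O(K)$, giving $2^{-O(K^2)}$. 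I expect to need the density-control lemma precisely to rule out the bad event that too much of $\A$'s mass sits in sections $\A^I_y$ that are nearly all of $\{0,1\}^I$ (where Talagrand gives nothing), and to handle the case $|\A| > (7/8)2^n$-type sections by the hypothesis $|\A| \le (7/8)2^n$ plus an averaging argument bounding the mass in dense sections. The hypothesis $|\A| \le (7/8)2^n$ is used to guarantee that a positive fraction of $\A$ lies in sections of density bounded away from $1$, so that ${\mathbb V}ar(1_{\A^I_y})$ is genuinely $\Omega(\beta_y)$ there; without it the statement is false (the whole cube has all influences zero).
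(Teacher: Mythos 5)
Your high-level strategy is close to the paper's in the setup: partition $[n]$ into $M \approx K$ blocks, use Lemma~\ref{lemma: sectional control}(ii) to find a block $I$ with small sectional excess, use Lemma~\ref{control of small cube sizes} to control section densities, and use Lemma~\ref{control of the conditional entropy implies product-like sizes} to control the product of orthogonal sections. You also correctly identify where the hypothesis $|\A| \le (7/8)2^n$ enters and that the dense case ($\alpha$ not too small) can be dispatched by applying Talagrand directly to $1_\A$ — this is exactly the paper's first case.

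However, the heart of your argument — the sparse case — has a genuine gap at the step that produces a \emph{single} influential coordinate. You apply Talagrand to each section $\A^I_y$ and deduce that each (sufficiently sparse, low-excess) section has \emph{some} influential coordinate $i_y \in I$, then claim to ``average over $y$'' to find one coordinate $i \in I$ with $I_i(1_\A) = {\mathbb E}_y\, I_i(1_{\A^I_y})$ large. This pigeonhole does not work: the influential coordinate can change from section to section, and the only unconditional conclusion from ``each section has a coordinate of influence $\ge\rho$'' is that the \emph{total} influence $\sum_{i\in I} I_i(1_\A) \ge \rho$. Since $|I| = d = \lceil\delta n\rceil$ grows with $n$, dividing by $|I|$ destroys the bound. (A secondary issue: the per-section Talagrand estimate you quote, $\max_i I_i(1_{\A^I_y}) \gtrsim 2^{-O(K^I_y/\beta_y + 1)}\beta_y$, is not what Talagrand gives. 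Combining $\sum_i I_i/(1-\log_2 I_i)\ge c\,{\mathbb V}ar$ with the excess hypothesis $\sum_i I_i \le 2\beta_y(\log_2\beta_y^{-1}+K^I_y)$ yields $\max_i I_i \gtrsim \beta_y^{2/c}\,2^{-2K^I_y/c}$; the density enters as a power $\beta_y^{2/c}$, not linearly, and the excess does not get divided by $\beta_y$.)

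The paper avoids the averaging problem entirely: it does not apply Talagrand to sections at all. Instead, it picks a block $I$ for which a $3/4$-fraction of $\A$ lies in sparse, low-excess, ``product-like'' sections; by pigeonhole there is one particular section $\A^I_{y_0}$ that retains a quarter of its mass in the good set $\B$. Since $K^I_{y_0}\le c_0$, \textbf{Ellis's theorem} (Theorem~\ref{thm: Ellis}) — which you list as a tool but never actually invoke — approximates $\A^I_{y_0}$ by a single subcube $\mathcal{C}\subset\{0,1\}^I$ whose \emph{fixed coordinate set} $T$ has size $t = O(K)$, bounded independently of $n$. The product-structure lemma then shows $|\D| := |\{x\in\A : x_I\in\mathcal{C}\}| \ge |\A|/2^{O(K)}$, and a short ``doubling'' argument over the $t$ coordinates in $T$ (releasing them one at a time, each step roughly doubling the set unless the released coordinate has influence $\ge 2^{-O(K)}\alpha$) derives a contradiction. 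The key structural points you are missing are (a) Ellis's theorem reducing to a \emph{bounded} set of candidate coordinates $T$, (b) the observation that the product structure gives a lower bound on the mass of $\D$, and (c) the doubling argument that finishes from there.
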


\begin{proof}
Let $|\A | = \alpha 2^n$, where $\alpha \leq 7/8$, and let $\beta _i = I_i({\mathbf 1}_{\cal A})$ for all $i\in [n]$ and $\beta = \max _{i\in [n]} \beta _i$. We will also fix a number of parameters to be used in the proof. Set $c_0 = \min \{\varepsilon _0, 1/8\}$, where $\varepsilon _0$ is as in Theorem \ref{thm: Ellis}. Also set $C_1 = 2^{12}/c_0$, $\delta = c_0/(32(K+1))$ and $M = \lceil 1/4\delta \rceil $. Lastly set $C = 32C_1/c$, with $c$ as in Theorem \ref{thm: Talagrand}.

We first consider the case when $\alpha \geq 2^{-C_1(K+1)^2}$, where the result follows from
Talagrand's inequality. Indeed, in this case 
$|\partial _e(\A )| \leq |\A| \big (C_1(K+1)^2 + K \big) \leq 2C_1(K+1)^2|{\cal A}|$. 
As $\aA \le 7/8$, we have ${\mathbb V}ar ({\bf 1}_{\cal A}) \geq |{\cal A}|/2^{n+3}$, 
so Theorem \ref{thm: Talagrand} gives
	\begin{equation*}
		\frac {c|{\cal A}|}{2^{n+3}} 
			\leq 
		c.{\mathbb V}ar ({\bf 1}_{\cal A}) 
			\leq 
		\sum _{i\in [n]} \frac {\beta _i}{1 - \log _2 \beta _i}			
			\leq 
		\frac {I({\bf 1}_{\cal A}) }{\log _2 ( \beta ^{-1})} 
			=
		\frac {|{\partial }_e({\cal A})|}{2^{n-1} \log _2 (\beta ^{-1})} 
			\leq 
		\frac {2C_1(K+1)^2|{\cal A}|}{2^{n-1} \log _2 (\beta ^{-1} )}.
	\end{equation*}
Rearranging $\beta \geq 2^{-32C_1(K+1)^2/c} = 2^{-C(K+1)^2}$, as required.

It remains to consider the case $\alpha \leq 2^{-C_1(K+1)^2}$.
We start by giving an overview of the argument in this case.
We will find a partition $I \cup J$ of $[n]$
so that for many elements $x$ of $\A$ 
the $I$-section is sparse and has small isoperimetric excess,
and the product of orthogonal sections 
through $x$ is comparable with $\A$.
We can then apply Ellis' theorem
to find a subcube ${\mathcal C} \subset \{0,1\}^{I}$
such that many elements of $\A$ 
have an $I$-restriction in ${\mathcal C}$.
Finally, we show that one of the coordinates
that is influential for ${\mathcal C}$ 
must also be influential for $\A$.

To begin, select a partition $[n] = I_0 \cup I_1 \cup \ldots \cup I_M$ uniformly at
random, with $|I_m| = d = \lceil \delta n \rceil $ for all $m\in [M]$ and $|I_0| = n - Md$.
(This is possible as $n \geq C_1(K+1)^2$, $\delta n \geq C_1c_0(K+1)/32 \geq 1$
and $Md \le (2\delta )^{-1}(2\delta n) \le n$.) 
Write $J_m = [n] \setminus I_m$ for all $m\in [M]$. 
We say $I_m$ is \emph{controlled} if 
	\begin{equation*} 
		\big | \big \{ x \in \A : |\A ^{I_m}_{{x} _{J_m}}| 
			\leq 16 \alpha ^{d /8n}2^{d} \big \} \big | 
		\geq 
		{3|\A |}/{4}.
	\end{equation*}
By Lemma \ref{control of small cube sizes} we have 
	\begin{equation*}
		\frac {1}{4} \times {\mathbb P}(I_m\mbox{ is not controlled}) \times 16 \alpha ^{d /8n}2^{d} 
				\leq 
		{\mathbb E}_{x,I_m}(|\A ^{I_m}_{x_{J_m}}|) \leq 2\alpha ^{d/8n} 2^d.
	\end{equation*} 
This gives ${\mathbb P}(I_m\mbox{ is controlled}) \geq 1/2$.
Letting $S_1 = \{m\in [M]: I_m \mbox{ is controlled}\}$, we find that $\mathbb {E}(|S_1|) \geq M/2$. Fix a choice of $I_1, \ldots ,I_M$ such that $|S_1| \geq M/2$. 

Now set $S_2 = \{ m\in [M]: {K}^{I_m} \leq 4K/M\}$. As $\sum _{m\in [M]} {K}^{I_m} \leq K$ by Lemma \ref{lemma: sectional control} $(ii)$, Markov's inequality gives $|S_2| \geq 3M/4$. 
Combined with the previous paragraph, this gives $S_1 \cap S_2 \neq \emptyset $. Fix $m\in S_1 \cap S_2$ and take $I = I_m$ and $J  = J_m$.

To proceed we now consider the following subsets of $\A $:
	\begin{enumerate}
		\item [(i)] $\A _1 =\big \{ {x}\in \A : |\A ^{I}_{{x}_{J}}|  
								\leq  16 \alpha ^{d /8n}2^{d} \big  \}$;
		\item [(ii)] $\A _2 = \big \{{x}\in \A : K^{I}_{{x}_{J}} \leq 16K/M 
		\big \}$;
		\item [(iii)] $\A _3 = \big \{{x}\in \A : 
		|\A ^{I}_{{x}_{J}}||\A ^{J}_{{x}_{I}}| \geq |\A |/e 2^{4K} 
		\big \}$.
	\end{enumerate}
Further let ${\cal B} = \A _1 \cap \A _2 \cap \A _3$. We claim that $|{\cal B}| \geq |\A |/4$. To see this, first note that $|\A _1| \geq 3|\A |/4$ as $m\in S_1$. Also, since ${\mathbb E}_{{x}\sim \A } K^{I}_{{x}_{J}} = {K}^{I} \leq {4K}/{M}$ as $m\in S_2$, by Markov's inequality $|\A _2| \geq 3|\A |/4$. Lastly, applying Lemma \ref{control of the conditional entropy implies product-like sizes} with $\varepsilon = 1/4$ gives $|\A _3| \geq 3|\A |/4$. Therefore $|{\cal B}| \geq |\A |/ 4$ as claimed.

We will now show that some $x \in {\cal B}$ has a coordinate of large influence in ${\cal A}^{I}_{x_{J}} \in \{0,1\}^I$ which extends to give large influence for ${\cal A}$. To see this, note that partitioning ${\cal B}$ over the $I$-sections gives
\begin{equation*}
	\sum _{{y}\in \{0,1\}^{J}} |{\cal B} ^{I}_{y}| = |{\cal B}| 
	\geq 
		\frac{|\A |}{4} = 
	\sum _{{y}\in \{0,1\}^{J}} \frac{|\A ^{I}_{y}|}{4}.
\end{equation*}
Therefore $|{\cal B} ^{I}_{{y}_0}| \geq |\A ^{I}_{{y}_0} |/4 > 0$ for some ${y}_0\in
\{0,1\}^{J}$. Fixing any ${x} \in {\cal B} ^{I}_{{y}_0}$, as $x \in \A_2$
we have $K^{I}_{{y}_0} = K^{I}_{{x}_{J}} \leq 16K/M \leq c_0$, i.e.\
\begin{align*}
|\partial _e^{I}(\A ^{I}_{{y}_0})| 
	\leq
|\A ^{I}_{{y}_0}|\big (\log _2 (2^{|I|}/|\A ^{I}_{{y}_0}|) + c_0 \big ) .
\end{align*}
Theorem \ref{thm: Ellis} therefore gives $|\A ^{I}_{{y}_0} \triangle {\mathcal C}| \leq 3 c_0 |\A ^{I}_{{y}_0}|/\log _2(c_0^{-1}) \leq |\A ^{I}_{{y}_0}|/8$ for some subcube ${\mathcal C} \subset \{0,1\}^{I}$. As $|{\cal B} \cap \A ^{I}_{{y}_0}| \geq |\A ^{I}_{{y}_0} |/4$, we also find $|\A ^{I}_{{y}_0} \cap {\cal B} \cap {\mathcal C}| \geq |\A ^{I}_{{y}_0}|/8$. 

Set ${\mathcal D} = \{{x} \in \A : {x}_{I} \in {\mathcal C}\}$. Note that ${\cal D}$ is insensitive to coordinates in $J$, in the sense that if $x \in {\cal D}$ and $\widetilde {x} \in \A $ with $x_I = {\widetilde x}_I$ then $\widetilde {x} \in {\cal D}$. Therefore $\bigcup _{{z} \in {\mathcal C}} \A ^{J}_{z} \subset {\mathcal D}$. However given ${x} \in {\cal A}^I_{y_0} \cap {\cal B} \cap {\cal C}$ we have $x \in \A _3$, so $
	|\A ^{I}_{{y}_0}||\A ^{J}_{{x}_{I}}|  = |\A ^{I}_{{x}_{J}}||\A ^{J}_{{x}_{I}}| \geq \frac {|\A |}{e2^{4K}}$. Combining this gives
\begin{align}
	\label{equation: D control}
	|{\mathcal D}| 
		 \geq 
	\sum _{{x}\in {\cal A}^I_{y_0} \cap {\cal B} \cap {\cal C}} 
	|\A ^{J}_{{x}_{I}}|
		 \geq 
	\sum _{{x}\in {\cal A}^I_{y_0} \cap {\cal B} \cap {\cal C}} 
	\frac {|\A |}{e2^{4K} |\A ^{I}_{{y}_0}|} 
		& \geq 
	|{\cal A}^I_{y_0} \cap {\cal B} \cap {\cal C}| \cdot 
		\frac {|\A |}{e2^{4K} |\A ^{I}_{{y}_0}|} \nonumber \\
		& \geq 
	\frac {|\A ^{I}_{{y}_0}|}{8} \cdot 
	\frac {|\A |}{e2^{4K} |\A ^{I}_{{y}_0}|} \geq 
	\frac {|\A |}{2^{4K + 5}}.
\end{align}
Thus a large proportion of elements ${x} \in{\mathcal A}$ satisfy ${x}_{I} \in {\mathcal C}$. 

We will now show that one of the coordinates that are influential for ${\cal C} \subset \{0,1\}^I$ must also be influential for ${\cal A}$. To see this, as ${\mathcal C} \subset \{0,1\}^{I}$ is a subcube there is $T = \{i_1,\ldots , i_t\} \subset I$ and ${z}_0 \in \{0,1\}^{T}$ with ${\mathcal C} = \{ {z} \in \{0,1\}^{I}: {z}_{T} = {z}_0\} $ and $ \log _2|{\mathcal C}|  = d - t = |I \setminus T|$. As $|{\cal C}| \leq 
|{\cal A}^I_{{y}_0}| + |{\cal A}^I_{{y}_0} \triangle {\cal C}| 
\leq 2 |{\cal A}^I_{{y}_0}| \leq 2^5 \alpha ^{d / 8n} 2^d$, we find 
\begin{equation}
	\label{equation: dimension control}
	t 
			= 
	d - \log_2 |{\cal C}| 
			\geq 
	\frac {d}{8n} \log _2 ( \alpha ^{-1}) - 5 
			> 
	\frac{ \delta }{8} \log _2 ( \alpha ^{-1} ) - 5 
			\geq
	\frac{c_0C_1(K+1)}{2^8}  - 5 
			\geq 
	4K+7.
\end{equation}
Here we used $\alpha \leq 2^{-C_1(K+1)^{2}}$, $\delta = c_0/(32(K+1))$ and $C_1 = 2^{12}/c_0$. 

Finally, suppose for a contradiction that
$\beta _{i} \leq |\A |/ (2^{4K + 6} 2^n)$ for all $i\in T$.
For $0 \le \ell \le t$ let 
	\begin{equation*}
		\A _{\ell } 
			= 
		\A \cap  \big \{ {x} \in \{0,1\}^n: {x}_{\{i_{\ell + 1},\ldots ,i_{t}\}} 
			= 
		{z}_{\{i_{\ell + 1},\ldots ,i_{t}\}} \big \}.
	\end{equation*} 
Clearly ${\cal D} = {\cal A}_0 \subset \A _1 \subset \cdots \subset \A _t = \A $. 
As ${\beta }_{i_{\ell }} \leq |\A |/2^{4K +6}2^n$, 
we have $|\A _{\ell }| \geq 2|\A _{\ell -1}| - {|\A |}/{2^{4K + 6}}$. Equivalently
$|\A _{\ell }| - {|\A |}/{2^{4K + 6}} \geq 2 \big ( |\A _{\ell -1}| - {|\A |}/{2^{4K + 6}} \big )$.
Taking $\ell = t$, we find 
	\begin{align*}
		|\A | 
				>
		|\A _{t}| - |\A |/2^{4K +6}
				\geq 
		2^t (|\A _{0}| - |\A |/2^{4K +6} ) 
				\geq 
		2^{t - 4K - 6} |\A |.
	\end{align*}
The final inequality holds since $|\A _0|  = |{\cal D}| \geq |\A | / 2^{4K + 5}$ by \eqref{equation: D control}. However, as $t \geq 4K + 7$ from \eqref{equation: dimension control}, this is a contradiction, and so, as $C_1 \geq 6$ we have $\beta \geq \max _{i\in T} \beta _i \geq 2^{-4K - 6}|\A |/2^{n} \geq 2^{-C_1(K+1)^2}|{\cal A}|/2^n$, as claimed. \end{proof}

\section{Almost isoperimetric sets are close to a union of cubes}

With Theorem \ref{coordinate of large influence} in hand, we can now prove Theorem \ref{dimension free stability}. 

\begin{proof}[Proof of Theorem \ref{dimension free stability}]
To begin, let $C_1 \geq 1$ be the constant given by Theorem \ref{thm: Friedgut} and let $C_2\geq 1$ be the constant given in Theorem \ref{coordinate of large influence}. Set $C = \max \{6{C}_1, 8C_2\}$ and let $g: {\mathbb R} \to {\mathbb R}$ be the function $g(x) = 2^{2^{C(x+1)^2}}$. We will show that for all $K\geq 0$ and $\varepsilon > 0$, given a set ${\cal A} \subset \{0,1\}^n$ with $|{\partial }_e({\cal A})| \leq |{\cal A}|\big ( \log _2(2^n/|{\cal A}|) + K \big )$ there are disjoint cubes $C_1,\ldots ,C_{L}$ with $|{\cal A} \triangle (\cup _{C \in {\cal C}} C )| \leq \varepsilon |{\cal A}|$ and $L \leq g(K/\varepsilon )$. 

Before beginning the proof, we note that this seemingly weaker bound on $L$ implies the bound stated in Theorem \ref{dimension free stability}. Indeed, if $\varepsilon >1$ then ${\cal A}$ can be ${\varepsilon }$-approximated $0$ subcubes. For $\varepsilon \leq 1$, if $K \leq K/\varepsilon < c_0:= \max \{\varepsilon _0, 1/8\}$ then ${\cal A}$ can $\varepsilon $-approximated by $1 \leq L(K/\varepsilon )$ subcube by Theorem \ref{thm: Ellis}. Otherwise, $1 \leq c_0^{-1} K/\varepsilon $ and $g(K/\varepsilon ) \leq L(K/\varepsilon ):= 2^{2^{4Cc_0^{-2}(K/\varepsilon )^2}}$.

We will prove the result by induction on $|\A | + n$.  
Clearly it holds when $|{\cal A}| = 1$ for all $n$. 
We also claim that the result holds when $|\A | \geq \big ( \frac{7}{8} \big ) 2^n$. 
Indeed, in this case we consider ${\cal A}^{c} = \{0,1\}^n \setminus {\cal A}$,
and write $|{\cal A}^c| = \alpha 2^n$ so that $\alpha \leq \frac{1}{8}$. Using $1-x \geq 2^{-2x}$ for $x \in [0,1/8]$ and applying Theorem \ref{thm: edge iso} to ${\cal A}^c$ we find
	\begin{equation*}
		2^n \big ( \log _2 ( 1 - \alpha )^{-1} + K \big ) 
					\geq 
		|\partial _e({\cal A})| 
					= 
		|\partial _e({\cal A}^c)| 
					\geq 
		\alpha 2^n \log _2 (\alpha ^{-1}) 	
					\geq 
		3\alpha 2^n \geq \Big ( \frac {3}{2} \Big ) 2^n \log _2 (1- \alpha )^{-1}.
	\end{equation*}
Thus $K \geq \frac {1}{2} \log _2 (2^n/|{\cal A}|)$ and $|\partial _e(\A )| \leq 3K2^n$. By Theorem \ref{thm: Friedgut} there are disjoint subcubes $C_1,\ldots ,C_L$ such that $|\A \triangle (C_1 \cup \cdots \cup C_L)| \leq \big ( \frac {\varepsilon }{2} \big ) 2^n \leq \varepsilon |\A |$ with $L \leq 2^{2^{C_1(3K/(\varepsilon /2))}} \leq g(K/\varepsilon )$, as desired.

Let ${\cal A} \subset \{0,1\}^n$ and assume that the result holds for smaller values of $|{\cal A}| + n$, and that $|{\cal A}| \leq \big (\frac{ 7}{8} \big )2^n$. We can apply Theorem \ref{coordinate of large influence} to ${\mathcal A}$ find a coordinate $j\in [n]$ with $I_j({\bf 1}_{\cal A}) = b_j|\A |/2^n$ where $b_j \geq c(K):= 2^{-C_2(K+1)^2}$. Without loss of generality $j = n$. 
Set $\A ^- = \A ^{[n-1]}_0$ and $\A ^+ = \A ^{[n-1]}_1$ and let $\gamma \in [0,1]$ with $|\A ^-| = \gamma |\A |$  and $|\A ^+| = (1 - \gamma ) |\A |$. By symmetry we may assume $\gamma \leq 1/2$. By Theorem \ref{thm: edge iso}, there are $K^-, K^+ \geq 0$ with
 \begin{align*}
 	|\partial _e^{[n-1]}(\A ^-)| 
 		& =
 	|\A ^-| \big ( \log _2(2^{n-1}/|\A ^-|) + K^- \big ),\\
 	|\partial _e^{[n-1]}(\A ^+)| 
 		& =
 	|\A ^+| \big ( \log _2(2^{n-1}/|\A ^+|) + K^+ \big ).
 \end{align*}
Expanding these expressions gives 
 \begin{align*}
 	|\partial _e^{[n-1]}(\A ^-)| 
 		& = 
 	\gamma |\A | \log _2 (2^n/|\A |) + \gamma |\A |(\log _2 \gamma ^{-1} - 1 + K^{-} ),\\
 	|\partial _e^{[n-1]}(\A ^+)| 
 		& = 
 	(1- \gamma )|\A | \log _2(2^n/|\A |) + 
 	(1- \gamma )|\A | (\log _2(1-\gamma)^{-1} - 1 + K^{+} ).
 \end{align*}
Combining these identities together with the contribution $b_n|\A |$ from 
the edges in direction $n$ gives 
\begin{align*}
	|\partial _e(\A )|
		& = 
	|\partial ^{[n-1]}_e(\A ^-)| + 
	|\partial ^{[n-1]}_e(\A ^+)| + 
	|\A ^- \triangle \A ^+|\\
		& = 
	|\A | \log _2 2^n/|\A | + |\A |\big ( H(\gamma ) - 1  
	+ b_n  + \gamma K^- + (1-\gamma )K^+ \big ).
\end{align*}
By possibly decreasing $K$ we can assume that
$|\partial _e (\A )| = |\A |\big (\log _2(2^n/|\A |) + K\big )$.
Then
\begin{equation}
	\label{improved isoperimetric error}
	\gamma K^- + (1-\gamma )K^+ 
			= 
	K - (H(\gamma ) -2\gamma ) - (b_n - (1-2\gamma )) 
	 := \widetilde{K}.
\end{equation}
Note that both bracketed terms here are non-negative. Indeed, $H(\gamma )$ is concave on $[0,1/2]$ as $H'(\gamma ) = \log _2 (\gamma /(1-\gamma )) \geq 0$ and so $H(\gamma ) \geq 2\gamma $ for $\gamma \in [0,1/2]$ as $H(0) = 0$ and  $H(1/2) = 1$. The second term is also non-negative as $b_n|\A | = |\A ^+ \triangle \A ^-| \geq |\A ^+| - |\A ^-| = (1- 2\gamma )|\A |$.  By partitioning the contribution of \eqref{improved isoperimetric error} we find $\delta \in [0,1]$ with $\gamma K^- = \delta \widetilde K$ and $(1 - \gamma ) K^{+} = (1-\delta ){\widetilde K}$. Also fix $E := 2^{-2C_2(K/\varepsilon +1)^2}$.

First suppose that $\widetilde {K} \leq K -E$. In this case, we will approximate both $\A ^-$ and $\A ^+$ by appropriate collections of cubes. Set $\varepsilon ^- = \delta \varepsilon /\gamma $ and 
$\varepsilon ^+ = (1 - \delta ) \varepsilon /(1 - \gamma )$. By the inductive hypothesis, there are disjoint subcubes ${\cal C}^- = \{C^-_j\}_{j\in [L^-]}$ with $L^- \leq L(K^-, \varepsilon ^-)$ and ${\cal C}^+ = \{C^+ _i\}_{i\in [L^+]}$ with $L^+ \leq L(K^+, \varepsilon ^+)$ so that 
	\begin{equation*}
		|{\cal A}^- \triangle (\cup _{C \in {\cal C}^-} C)| 
				\leq 
		\varepsilon ^- |{\cal A}^-| = \delta \varepsilon |\A |
		\qquad \mbox {and} \qquad 		
		|{\cal A}^+ \triangle (\cup _{C \in {\cal C}^+} C)| 
				\leq 
		\varepsilon ^+ |{\cal A}^+| = (1 - \delta )\varepsilon |\A |.
	\end{equation*} 
We can naturally identify cubes in ${\cal C}^-$, ${\cal C}^+$ 
with subcubes of $Q_n$ in which the $n$th coordinate is $0$, $1$, respectively.
Taking ${\cal C} = {\cal C}^- \cup {\cal C}^+$ we find $|{\A} \triangle ( \cup _{C \in {\cal C}} C)| \leq \varepsilon |\A |$. Therefore 
	\begin{equation}
		\label{equation: cube bound}
		|{\cal C}| 
				\leq 
		L(K^-,\varepsilon ^-) + L(K^+,\varepsilon ^+) 
				\leq 
		2 g({\widetilde K}/\varepsilon ) 
				\leq 
		2g\big ((K - E)/\varepsilon \big ) 
				\leq 
		2g\big (K/\varepsilon  - E \big ).
	\end{equation} 
Note that the function $h (x) = \log _2 g(x) = 2^{C(x+1)^2}$ satisfies $h'(x) \geq 2(\log _e2)C(x+1)h(x) \geq h(x)$ and so $h'(x)$ is increasing. By the mean value theorem, using $E \leq (K/\varepsilon + 1 )/2$, this gives
	\begin{equation*}
		1 + h(K/\varepsilon -E) \leq 1 + h(K/\varepsilon ) - Eh'(K/\varepsilon -E) \leq 1 + h(K/\varepsilon ) - E2^{C(K/\varepsilon + 1)^2/4} \leq h(K/\varepsilon ).
	\end{equation*} 
Here $E2^{C(K/\varepsilon + 1)^2/4}  = 2^{(C/4 - 2C_2)(K/\varepsilon +1)^2} \geq 1$ as $C \geq 8{C_2}$. 
Exponentiating, and combining with \eqref{equation: cube bound}
we find $|{\cal C}| \leq 2 g(K/\varepsilon - E) \leq g(K/\varepsilon )$, completing the proof of this case.

It remains to deal with the case $\widetilde {K} \geq K - E$. We claim that this is only possible if $\gamma \leq E$. To see this, note that by \eqref{improved isoperimetric error} in this case we have ($i$) $b_n - (1-2\gamma ) \leq E$ and ($ii$) $H(\gamma) - 2\gamma \leq E$. Since $b_n \geq c(K) \geq 2E$, by ($i$) we have $\gamma \leq 1/2 - c(K)/4$. Also $H(\gamma ) - 2\gamma \geq 2\gamma - 4 \gamma ^2 = 
2\gamma (1 -2 \gamma ) \geq \min (\gamma ,1-2\gamma )$ for $\gamma \in [0,1/2]$. Therefore $H(\gamma ) - 2\gamma > E$ for $\gamma \in (E, 1/2 - c(K)/4]$, which by ($ii$) forces $\gamma \leq E$, as claimed.

As $\A ^-$ is small, we can approximate $\A$ by deleting $\A ^-$
and approximating $\A ^+$ by subcubes
with accuracy $\varepsilon ' = (\varepsilon - \gamma ) / (1 - \gamma )$. 
By induction, there are disjoint cubes ${\cal C} = (C_i)_{i \in [L]}$ with 
$|\A ^+ \triangle (\cup _{C \in {\cal C}} C)| \leq \varepsilon '|\A ^+|$ and $L \leq g(K ^+/ \varepsilon ')$. But then $|{\cal A} \triangle (\cup _{C \in {\cal C}} C)| \leq \varepsilon '|\A ^+| + |\A ^-| \leq \varepsilon |\A |$. Lastly, using $\gamma \leq E$, $C_2 \geq 1$ and 
$K^+ \leq \big ( K - (H(\gamma ) - 2 \gamma )\big )/(1- \gamma )$ we have 
	\begin{equation*}
		\frac{K^+ }{\varepsilon '} 
				\leq 
		\frac {K - \gamma \log _2 \gamma ^{-1} /3}{\varepsilon - \gamma } 				\leq 
		\frac {K - 2\gamma C_2 (K/\varepsilon +1)^2/3}  
		{\varepsilon - \gamma }  
				\leq 
		\frac {K - \gamma K/\varepsilon }{\varepsilon - \gamma } 
				= 
		\frac{K }{\varepsilon }.
	\end{equation*}
Therefore $L \leq g({K^+/\varepsilon '}) \leq g(K/\varepsilon )$, completing the proof of this case and the Theorem.
\end{proof}	

\noindent \textbf{Note:} Keller and Lifshitz \cite{KL} have independently and simultaneously proved a stronger version of our main theorem, with an essentially optimal bound of $L(K,\varepsilon ) \leq 2^{2^{C(K/\varepsilon )}}$. Although our bounds are weaker, as our approach is significantly different we feel that the methods may be useful for similar problems in the future, particularly if they are not amenable to the compression arguments used in \cite{KL}. \vspace{2mm}

\noindent \textbf{Acknowledgement:}
We would like to thank David Ellis for bringing the independent work  of Keller and Lifshitz \cite{KL} to our attention.


\begin{thebibliography}{99}

\bibitem{aands} N. Alon and J. Spencer,
\emph{The Probabilistic Method}, Wiley, 2008.

\bibitem{Beckner} W. Beckner, Inequalities in Fourier analysis. 
\textit{Ann. Math.}, \textbf{102} (1975), 159--182.

\bibitem{BL} M. Ben-Or and N. Linial, Collective Coin Flipping, Robust Voting Schemes and Minima of Banzhaf Values, \textit{FOCS. IEEE Computer Society}, (1985), 408--416.

\bibitem{Bon} A. Bonami, \'Etude des coefficients de Fourier des fonctions de $L_p(G)$. 
\emph{Annales de l'Institut Fourier}, \textbf{20} (1970), 335--402. 

\bibitem{bernstein} A. J. Bernstein, Maximally connected arrays on the n-cube, \textit{SIAM J. Appl. Math.} \textbf{15} (1967), 1485--1489.

\bibitem{bol} B. Bollob\'as, \emph{Combinatorics}, Cambridge University Press, 1986.


\bibitem{B} J. Bourgain, appendix to \cite{FrShThres}.

\bibitem{cfgs} F.R.K. Chung, P. Frankl, R.L. Graham and J.B. Shearer, Some intersection theorems for ordered sets and graphs. \textit{J. Comb. Theory, Ser. A}, \textbf{43} (1986), 23--37.

\bibitem{ct}
T.M. Cover and J.A. Thomas,
{\em Elements of Information Theory},
Wiley, 2006.

\bibitem{Ellis} D. Ellis, Almost isoperimetric subsets of the discrete cube, \textit{Combin. Probab. Comput.} \textbf{20} (2011), 363--380.

\bibitem{EKL} D. Ellis, N. Keller and N. Lifshitz, On the structure of subsets of the discrete cube with small edge boundary, \textit{arXiv:1612.06680}.

\bibitem{FS} D. Falik and A. Samorodnitsky, Edge-Isoperimetric Inequalities and Influences, \textit{Combin. Probab. Comput.}, \textbf{16} (2007), 693--712.

\bibitem{Friedgut} E. Friedgut, Boolean functions with low average sensitivity depend on few coordinates,
\textit{Combinatorica} \textbf{18} (1998), 27--35.

\bibitem{FrShThres} E. Friedgut, Sharp thresholds of graph proprties, and the k-sat problem, \textit{J. Amer. Math. Soc.} \textbf{12} (1999), 1017--1054.

\bibitem{FK} E. Friedgut and G. Kalai, Every monotone graph property has a sharp threhold, \textit{Proc. Amer. Math. Soc.} \textbf{124} (1996), 2993--3002.

\bibitem{harper} L. H. Harper, Optimal assignments of numbers to vertices, \textit{SIAM J. Appl. Math.} \textbf{12}
(1964), 131--135.

\bibitem{hart} S. Hart, A note on the edges of the $n$-cube, \textit{Discrete Math.} \textbf{14} (1976), 157--163.

\bibitem{H} H. Hatami, A structure theorem for Boolean functions with small total influences, \textit{Ann. Math.}, \textbf{176} (2012), 509--533.

\bibitem{KK} J. Kahn and G. Kalai, Thresholds and Expectation Thresholds, \textbf{16} (2007), 495--502.

\bibitem{KL} N. Keller and N. Lifshitz, Approximation of biased Boolean functions of small total influence by DNF's, \textit{preprint}.

\bibitem{KKL} J. Kahn, G. Kalai and N. Linial, The influence of variables on Boolean functions. \textit{Proceedings of the 29th Annual IEEE Symposium on Foundations of Computer Science}
(1988), 68--80.

\bibitem{lindsey} J. H. Lindsey II, Assignment of numbers to vertices, \textit{Amer. Math. Monthly} \textbf{71} (1964), 508--516.

\bibitem{moo}
E. Mossel, R. O'Donnell and K. Oleszkiewicz,
Noise stability of functions with low influences: invariance and optimality,
\emph{Ann. Math.} {\bf 171} (2010), 295--341.

\bibitem{o'd} R. O'Donnell,
{\em Analysis of Boolean Functions},
Cambridge University Press, 2014.

\bibitem{poly} Y. Polyanskiy, Hypercontractivity of spherical averages in Hamming space, \textit{arXiv:1309.3014}.

\bibitem{R} R. Rossignol, Threshold for monotone symmetric properties through a logarithmic Sobolev inequality, \emph{Ann. Prob.}, \textbf{34} (2005), 1707-1725.

\bibitem{russo} L. Russo, An approximate zero-one law. \textit{Z. Wahrsch. Verw. Gebiete} \textbf{61} (1982), 129--139.

\bibitem{Tal} M. Talagrand, On Russo's approximate zero-one law, \textit{Ann. Probab.} \textbf{22} (1994), 1576--1587.

\end{thebibliography}
\end{document}